\newtheorem{theorem}{Theorem}[section]
\newtheorem{lemma}[theorem]{Lemma}
\newtheorem{proposition}[theorem]{Proposition}
\newtheorem{corollary}[theorem]{Corollary}
\newtheorem{remark}[theorem]{Remark}
\newtheorem{definition}[theorem]{Definition}
\newcommand{\R}{{\mathbb{R}}}
\newcommand{\e}{{\epsilon}}
\newcommand{\g}{{\gamma}}
\newcommand{\M}{{\mathcal{M}}}
\newcommand{\vphi}{\varphi}
\newcommand{\na}{{\nabla}}
\newcommand{\B}{\mathcal{B}}
\newcommand{\Ric}{{\rm{Ric}}}
\newcommand{\vol}{{\rm{vol}}}
\newcommand{\ASp}{{\rm{AS}}_p}
\begin{document}

\title[Quantitative Yamabe]{Quantitative Stability for Minimizing Yamabe Metrics}
\author{Max Engelstein}
\author{Robin Neumayer}
\author{Luca Spolaor}
  \thanks{ 
}
\address{Department of Mathematics, University of Minnesota, Minneapolis, MN, 55455, USA.}
\email{mengelst@umn.edu}
\address{Department of Mathematics, Carnegie Mellon University, Pittsburgh, PA, 15213, USA.}
\email{neumayer@cmu.edu}
\address{Department of Mathematics, UC San Diego, La Jolla, CA, 92093, USA.}
\email{lspolaor@ucsd.edu}
\date{\today}

\begin{abstract}
On any closed Riemannian manifold of dimension $n\geq 3$, we prove that if a function nearly minimizes the Yamabe energy, then the corresponding conformal metric is close, in a quantitative sense, to a minimizing Yamabe metric in the conformal class. Generically, this distance is controlled quadratically by the Yamabe energy deficit. Finally, we produce an example for which this quadratic estimate is false.
\end{abstract}

\maketitle

\section{Introduction}\label{s:intro}
Let $(M^n,g)$ be a closed Riemannian manifold of dimension $n\geq 3$. The Yamabe problem consists of finding a metric $\tilde{g}$, conformal to $g$, such that the scalar curvature of $\tilde{g}$ is constant. Given a metric $\tilde{g}$ conformal to $g$, i.e.  $\tilde{g} = u^{4/(n-2)} g$ for  a smooth positive function $u$ on $M$, the scalar curvature, $R_{\tilde{g}}$, of $\tilde{g}$ is given in terms of $u$ and the scalar curvature, $R_g$, of $g$  by
\begin{equation}\label{eqn: comformal change}
R_{\tilde{g}} = u^{1-2^*}\left(-c_n \Delta u + R_g u \right),
\end{equation}
where $2^* = 2n/(n-2)$ and $c_n =4(n-1)/(n-2).$ In particular,  a metric ${\tilde{g}} = u^{4/(n-2)} g$ is a  solution to the Yamabe problem if and only if $u$ is a  smooth positive critical point of the associated energy functional 
\begin{equation}\label{eqn: Yamabe functional}
Q(u) =
\ \frac{ \int_M c_n |\na u|^2 + R_g u^2 \, d\vol_g
}{\left(\int_M u^{2^*}\, d\vol_g\right)^{2/2^*}} =\frac{\int_M R_{\tilde{g}}\, d\vol_{\tilde{g}} }{
\vol_{\tilde{g}}(M )^{2/2^*}
}.
 \end{equation}

  The solution to the Yamabe problem was given by the combined works of Yamabe \cite{OGYamabe}, Trudinger  \cite{Trudinger},  Aubin \cite{AubinYamabe}, and Schoen \cite{SchYamabe} (see also the  survey paper \cite{LeeParker}), which established the existence of a smooth positive minimizer of \eqref{eqn: Yamabe functional}, i.e. a positive function $u \in C^{\infty}(M)$ with $Q(u) = Y(M,[g])$, where we define the Yamabe constant of $(M,g)$ by
  \[
 Y(M,[g]) = \inf\{ Q(u): u \in W^{1,2}(M)\,,\,u\geq 0\}.
 \]
Here $[g]$ denotes the conformal class of $g$. Sometimes, when it won't cause confusion, we will omit the dependence on $M$ and $[g]$.  
 
The Yamabe constant, $Y(S^n, [g_0])$, on the round sphere plays an important role in  the solution to the Yamabe problem on a general manifold, $M^n$. When $Y(M^n,[g]) <Y(S^n, [g_0])$, the existence of a minimizer can be established through analytic methods, either by approximating the Euler-Lagrange equation assiciated to \eqref{eqn: Yamabe functional} by subcritical equations (\cite{Trudinger, LeeParker}), or via concentration compactness methods (see \cite{Lions} or Uhlenbeck, c.f  \cite{LeeParker}). On the other hand,  Aubin \cite{AubinYamabe} and Schoen \cite{SchYamabe} showed that $Y(M^n,[g]) <Y(S^n, [g_0])$  for any closed Riemannian manifold, $(M^n,g)$, that is not conformally equivalent to the round sphere. 

In the case of the round sphere, the class of minimizers $\mathcal{M}_{(S^n,g_0)}$ of \eqref{eqn: Yamabe functional} were explicitly characterized by Aubin \cite{AubinSobolev} and Talenti \cite{Talenti} (see also Obata \cite{Obata}): after composing with a stereographic projection, which maps the problem to Euclidean space, the set of minimizers is exactly the function $v_0 = (1+|x|^2)^{(2-n)/2}$, along with its translations, dilations, and constant multiples on $\R^n$.  
 
In \cite{brezis1985sobolev}, Brezis and Lieb raised the question of quantitative stability for minimizers of the Yamabe functional on the sphere, asking whether the energy deficit $Q_{(S^n,g_0)}(u) -Y(S^n, [g_0])$ of a given function $u \in W^{1,2}(S^n)$ controls its distance to the family of  minimizers $\mathcal{M}_{(S^n,g_0)}$. An optimal solution was given in \cite{BianchiEgnell91}, where Bianchi and Egnell showed that there exists a dimensional constant  $c$ such that 
\begin{equation}\label{eqn: BE}
Q_{(S^n,g_0)}(u) -Y(S^n, [g_0]) \geq c  \left(\frac{\inf \left\{ \| u-v\|_{W^{1,2}(S^n)}\ | \  {v \in \mathcal{M}_{(S^n,g_0)}} \right\}}{\| u \|_{W^{1,2}(S^n )} }\right)^2
\end{equation}
 for any nonnegative $u \in W^{1,2}(S^n)$.\footnote{We note that question of Brezis and Lieb and the result in \cite{BianchiEgnell91} are stated on Euclidean space, but the form \eqref{eqn: BE} follows after composition with stereographic projection and integration by parts.}  This result is sharp in the sense that the exponent $2$ cannot be replaced by a smaller one and the $W^{1,2}$ norm measuring the distance of $u$ to the family of minimizers cannot be replaced by a stronger norm.

 In this paper, we address this question of Brezis and Lieb in the setting of the Yamabe functional on {\it any} smooth closed Riemannian $n$-manifold $(M^n,g)$, with $n\geq 3.$ In contrast to the case of the round sphere, the minimizers for a general manifold are not known in any explicit form.

Fix a closed Riemannian manifold, $(M^n,g)$, of dimension $n\geq 3,$ and let $\M \subset W^{1,2}(M)$ denote the set of all minimizers of $Q(u)$.     Define
 \begin{equation}\label{eqn: dist def}
 d(u,\M) = \frac{\inf\left\{ \| u-v\|_{W^{1,2}(M)} \ | \ v \in \M\right\}}{\| u\|_{W^{1,2}(M)}}.
 \end{equation}
  Notice that the normalization in this definition guarantees that $d(u , \mathcal{M}) \leq 1$ for any $u \in W^{1,2}(M).$ 
  Our first main result is a quantitative stability estimate for minimizers of the Yamabe functional.
 \begin{theorem}[Quantitative stability for minimizers]\label{thm: minimizers}
 	Let $(M^n,g)$ be a $C^\infty$ closed Riemannian manifold of dimension $n\geq 3$ that is not conformally equivalent to the round sphere. There exist constants $c>0$ and  $\g\ge 0 $, depending on $(M,g)$, such that 
 	\begin{equation}\label{e: minimizers}
 	Q(u) - Y(M,[g]) \geq c\, d(u, \mathcal{M})^{2+\g}\, \qquad \forall u \in W^{1,2}(M;\R_+)  \,.
 	\end{equation}
 	Moreover, there exists an open dense subset $\mathcal G$ in the $C^2$ topology on the space of $C^\infty$-conformal classes of metrics on $M$ such that if $[g] \in \mathcal G$, we may take $\g=0$.
 \end{theorem}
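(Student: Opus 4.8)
The plan is to prove a {\L}ojasiewicz--Simon gradient inequality for the Yamabe functional and combine it with a concentration-compactness dichotomy. Set $\mathcal C=\{u\in W^{1,2}(M;\R_+): \int_M u^{2^*}\,d\vol_g=1\}$, the natural slice for the scaling invariance $Q(tu)=Q(u)$. I would first treat the regime $d(u,\M)\ge\delta_0$: since $(M,g)$ is not conformal to the round sphere, $Y(M,[g])<Y(S^n,[g_0])$ by Aubin \cite{AubinYamabe} and Schoen \cite{SchYamabe}, so concentration-compactness forbids bubbling (a bubble carries energy $\ge Y(S^n,[g_0])$) and every near-minimizing sequence is precompact in $W^{1,2}$; hence $\inf\{Q(u)-Y: d(u,\M)\ge\delta_0\}>0$ and \eqref{e: minimizers} is immediate there, using $d(u,\M)\le1$. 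The same precompactness shows $\M\cap\mathcal C$ is compact. For $d(u,\M)<\delta_0$ I would use Ekeland's variational principle to replace $u$ by $\tilde u\in\mathcal C$ with $Q(\tilde u)\le Q(u)$, $\|\tilde u-u\|_{W^{1,2}}\lesssim(Q(u)-Y)^{1/2}$, and $\tilde u$ solving the Yamabe equation up to a $W^{-1,2}$-error $\lesssim(Q(u)-Y)^{1/2}$; $\e$-regularity for this critical equation (again using that no bubble forms) gives uniform $C^{2,\al}$ bounds for $\tilde u$ and, with Arzelà--Ascoli and the $W^{1,2}$-precompactness, a point $v\in\M\cap\mathcal C$ with $\|\tilde u-v\|_{C^{2,\al'}}$ small. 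Since $Q(\tilde u)\le Q(u)$ and $\|u-\tilde u\|_{W^{1,2}}$ is controlled by $(Q(u)-Y)^{1/2}$, it suffices to prove \eqref{e: minimizers} for $\tilde u$; so the problem reduces to working in a fixed $C^{2,\al}$-neighborhood of $\M$.

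In that neighborhood $Q$ is real-analytic (minimizers are smooth and positive, so $\int u^{2^*}$ is analytic nearby), and the Hessian of $Q|_{\mathcal C}$ at $v$ is governed by the Fredholm operator $L_v=-c_n\Delta+R_g-(2^*-1)\,Y\,v^{2^*-2}$; let $K_v\subset T_v\mathcal C$ be its kernel---the degeneracy space---which contains $T_v(\M\cap\mathcal C)$. Writing $\tilde u=v+k+\psi$ with $k\in K_v$, $\psi\in K_v^\perp$, the implicit function theorem solves $\Pi_{K_v^\perp}\na(Q|_{\mathcal C})(v+k+\psi)=0$ for $\psi=\psi_v(k)$ and produces a real-analytic reduced functional $f_v(k)=Q(v+k+\psi_v(k))$ with $f_v(0)=Y$ a minimum. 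The finite-dimensional {\L}ojasiewicz inequality gives $\theta_v\in(0,\tfrac12]$ and $C_v$ with $|f_v(k)-Y|^{1-\theta_v}\le C_v|\na f_v(k)|$, uniform over the compact $\M\cap\mathcal C$ with $\theta:=\inf_v\theta_v>0$. The standard {\L}ojasiewicz argument (gradient flow of $f_v$) gives $\mathrm{dist}(k,\{f_v=Y\})\lesssim(f_v(k)-Y)^{\theta}$; combining this with the quadratic bound $Q(\tilde u)-f_v(k)\gtrsim\|\psi-\psi_v(k)\|^2$ from strong convexity in $K_v^\perp$, with $f_v(k)-Y\le Q(\tilde u)-Y$, and with the fact that $v+k_\infty+\psi_v(k_\infty)\in\M$ whenever $f_v(k_\infty)=Y$, chains to $d(\tilde u,\M)\lesssim(Q(\tilde u)-Y)^{\theta}$, hence \eqref{e: minimizers} with $2+\g=1/\theta\ge2$.

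For the generic statement, I would take $\mathcal G$ to be the conformal classes for which every minimizing Yamabe metric is non-degenerate, i.e. $K_v=T_v(\M\cap\mathcal C)$ for all minimizers $v$ (so $f_v$ is constant near $0$ and $\M\cap\mathcal C$ is a manifold through $v$), or, in the typical case, $\M\cap\mathcal C$ is a point and $L_v|_{T_v\mathcal C}$ is coercive. Openness in the $C^2$ topology follows from the implicit function theorem together with upper semicontinuity of $\dim K_v$; density is a Sard--Smale transversality argument for the map $(g,u)\mapsto\big(-c_n\Delta_g u+R_g u-Y(M,[g])\,u^{2^*-1},\ \|u\|_{L^{2^*}}^{2^*}-1\big)$, using that a conformal perturbation localized where a would-be kernel element is nonzero removes that element from the kernel. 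For $[g]\in\mathcal G$ one has $\theta_v=\tfrac12$ for every $v$, so $\g=0$; here the reduction of the previous paragraph collapses to a direct second-order expansion of $Q$ about the nearest $v\in\M$, giving $Q(u)-Y\ge c\,d(u,\M)^2+o(d(u,\M)^2)$.

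The main obstacle is the genericity claim: when $\M$ is a priori only a closed set, one must control how the entire minimizing set and the kernel along it deform under metric perturbation and show that degeneracy is non-generic, which is delicate because without transversality one does not even know $\M$ is a manifold---the structure of $\M$ and the genericity have to be bootstrapped together. Secondary points are the uniformity of the {\L}ojasiewicz exponent and constant over $\M\cap\mathcal C$ (needing Lyapunov--Schmidt neighborhoods of uniform size, obtained by compactness) and the passage from the $W^{1,2}$-distance controlling the deficit to the $C^{2,\al}$-topology in which analyticity holds, which the Ekeland/$\e$-regularity step in the first paragraph is designed to bridge.
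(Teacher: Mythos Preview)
Your overall architecture---concentration-compactness for the far regime, Lyapunov--Schmidt plus a finite-dimensional \L ojasiewicz inequality for the near regime, and a covering of the compact set $\M\cap\mathcal C$ for uniformity---is exactly the paper's. Two points deserve comment.

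\textbf{The Ekeland/$\e$-regularity step is both unnecessary and contains a gap.} Ekeland on $(\mathcal C,\|\cdot\|_{W^{1,2}})$ produces $\tilde u$ with $\|\nabla_{\mathcal C}Q(\tilde u)\|_{(W^{1,2})^*}\lesssim(Q(u)-Y)^{1/2}$, i.e.\ an error $e\in H^{-1}$ in the equation. Even if no bubble forms and $\tilde u$ is bounded in $L^\infty$, an $H^{-1}$ right-hand side caps elliptic regularity at $W^{1,2}$; you do not obtain $C^{2,\alpha}$ bounds on $\tilde u$, so the sentence ``$\e$-regularity \dots\ gives uniform $C^{2,\alpha}$ bounds for $\tilde u$'' does not follow. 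The paper sidesteps this entirely: for $u\in\B(v,\delta)$ one only needs the $L^2$-projection $\pi_K(u-v)$ onto the \emph{finite-dimensional} kernel $K$, which is automatically smooth since $K\subset C^\infty$. The Lyapunov--Schmidt map $F$ (built via the IFT in $C^{2,\alpha}$) is then applied to $\pi_K(u-v)\in K$, producing the smooth point $u_{\mathcal L}=v+\pi_K(u-v)+F(\pi_K(u-v))\in\B$; analyticity of the reduced functional $q$ is available because its argument lives in $K\cong\R^\ell$. The remainder $u^\perp=u-u_{\mathcal L}\in K^\perp$ is handled purely by $W^{1,2}$-continuity of $\nabla_\B^2 Q$ and the spectral gap on $K^\perp$, giving $Q(u)-Q(u_{\mathcal L})\ge \tfrac{\lambda_1}{4}\|u^\perp\|_{W^{1,2}}^2$. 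No regularity on $u$ beyond $W^{1,2}$ is ever used, so your ``secondary point'' about bridging $W^{1,2}$ and $C^{2,\alpha}$ dissolves.

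\textbf{Minor differences.} You invoke the \emph{gradient} \L ojasiewicz inequality and integrate along the gradient flow of $f_v$ to reach a point of $\{f_v=Y\}\subset\M$; the paper instead applies the \emph{distance} \L ojasiewicz inequality $|q(\vphi)-q(0)|\ge c\,\mathrm{dist}(\vphi,\{\nabla q=0\})^{2+\gamma}$ directly, which is slightly shorter but equivalent in effect. For the generic statement, the paper does not run a Sard--Smale argument but cites Schoen and Anderson: for an open dense set of conformal classes in the $C^2$ topology, all critical Yamabe metrics are finite in number and non-degenerate. With that in hand your ``main obstacle'' disappears, and the non-degenerate case of the local estimate (where $K=0$, so $u_{\mathcal L}=v$ and the $\lambda_1$-bound alone gives $\gamma=0$) finishes the proof.
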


From a geometric point of view, one drawback of Theorem \ref{thm: minimizers} is that the distance, $d(u, \mathcal{M})$, depends on the choice of background metric, $g \in [g]$. However, as a consequence of Theorem~\ref{thm: minimizers}, we obtain the following conformally invariant stability estimate.  Define the following conformally invariant distance between two metrics in a conformal class: 
 \[ \| g_u - g_v\| = \left( \int_M |u-v|^{2^*} \, d\vol_g \right)^{1/2^*},
 \]  
where here and in the sequel we will freely make the identification of a conformal metric $g_u = u^{4/(n-2)}g$ and its conformal factor $u$. Although $\| - \|$ is defined with respect to a fixed conformal representative $g \in [g]$, we will show that it is independent of this choice.
Similarly,  in the case when $Y = Y(M, [g])\geq 0$, may define 
\[
\| g_u -g_v\|_* = \left( \int_M c_n |\na u-\na v|^2 + Y (u-v)^2 \, d\vol_g\right)^{1/2}  
\]
for any $ g \in \mathcal{M}(M, g)$ with $\vol_g(M)=1$. Again, although $\| -\|_*$ is  defined with respect to a fixed conformal representative, we show that the definition is independent of this choice.

 \begin{corollary}[Conformal quantitative stability] \label{cor: conformal CONQUEST}
 Let $(M^n,g)$ be a $C^\infty$ closed Riemannian manifold of dimension $n\geq 3$. There exist constants $c>0$ and  $\g\ge 0 $, depending on $M$ and $[g]$, such that 
 	\begin{equation}\label{e: minimizers conf invar}
 	\mathcal{R}_g  - Y(M,[g]) \geq c\, \left(\frac{\inf\{ \| g - \tilde g\| : \tilde{g} \in \M \}}{\vol_g(M)^{1/2^*}}\right)^{2+\g}\, \qquad \forall g \in [g]\,.
 	\end{equation}
 	Here $\mathcal{R}_g=  \vol_g(M)^{-2/2^*}\,\int_M R_{g} \, d\vol_g$ is the volume-normalized total scalar curvature of $g$.
 When $Y= Y(M,[g]) \geq 0$ and $\mathcal{R}_g  - Y(M,[g])  \leq 1$, there exist constants $c>0$ and  $\g\ge 0 $ depending on $M$ and $[g]$ such that 
	\begin{equation}\label{e: minimizers conf invar2}
 	\mathcal{R}_g  - Y(M,[g]) \geq c\, \left(\frac{\inf\{ \| g - \tilde g\|_* : \tilde{g} \in \M \}}{\vol_g(M)^{ 1/2^*}}\right)^{2+\g}\, \qquad \forall g \in [g]\,.
 	\end{equation}
Moreover, for an open dense subset in the $C^2$ topology on the space of conformal classes of $C^\infty$  metrics on $M$, we may take $\g=0$. 
 \end{corollary}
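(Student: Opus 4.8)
The plan is to deduce Corollary~\ref{cor: conformal CONQUEST} from Theorem~\ref{thm: minimizers}; in the single case excluded from that theorem---when $(M,g)$ is conformal to the round sphere---we appeal instead to the Bianchi--Egnell inequality \eqref{eqn: BE}, which has exactly the same form with $\g=0$. The idea is to rewrite the conformally invariant quantities in \eqref{e: minimizers conf invar}--\eqref{e: minimizers conf invar2} through a fixed conformal representative $g_0$ and then to reconcile the two normalizations that appear. Writing a competitor metric as $g=u^{4/(n-2)}g_0$ and a minimizer as $\tilde g=v^{4/(n-2)}g_0$, the relations \eqref{eqn: comformal change}--\eqref{eqn: Yamabe functional} give $\mathcal R_g=Q(u)$, while $\vol_g(M)^{1/2^*}=\|u\|_{L^{2^*}(g_0)}$ and $\|g-\tilde g\|=\|u-v\|_{L^{2^*}(g_0)}$ straight from the definitions, the independence of $\|\cdot\|$ from the choice of $g_0$ being the one-line change of variables $d\vol_{w^{4/(n-2)}g_0}=w^{2^*}\,d\vol_{g_0}$. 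Thus \eqref{e: minimizers conf invar} asserts that $Q(u)-Y$ controls a power of the \emph{$L^{2^*}$}-normalized distance of $u$ to $\M$, whereas Theorem~\ref{thm: minimizers} controls that deficit by a power of the \emph{$W^{1,2}(g_0)$}-normalized distance $d(u,\M)$. Since $\lambda v_0\to0$ in $L^{2^*}$ as $\lambda\to0^+$ and $\M$ is scale invariant, the $L^{2^*}$-normalized distance is always $\le1$, so \eqref{e: minimizers conf invar} is trivial once $Q(u)-Y\ge\e_0$ for a fixed $\e_0>0$; only the regime $Q(u)-Y<\e_0$, with $\e_0$ small, requires argument.

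In that regime I would establish the normalization comparison $\|u\|_{L^{2^*}(g_0)}\gtrsim\|u\|_{W^{1,2}(g_0)}$; granting it, \eqref{e: minimizers conf invar} follows from \eqref{e: minimizers} together with the Sobolev embedding $W^{1,2}(M)\hookrightarrow L^{2^*}(M)$, which bounds the $L^{2^*}$-distance to $\M$ by a constant times the $W^{1,2}$-distance, hence the $L^{2^*}$-normalized distance by a constant times $d(u,\M)$, preserving the exponent $2+\g$. The comparison rests in turn on the uniform lower bound $\|v\|_{L^{2^*}(g_0)}\gtrsim\|v\|_{W^{1,2}(g_0)}$ over all $v\in\M$: when $Y>0$ the quadratic form $w\mapsto\int_M c_n|\na w|^2+R_{g_0}w^2\,d\vol_{g_0}$ is coercive on $W^{1,2}$ (the standard consequence of $Y>0$) and equals $Y\|v\|_{L^{2^*}(g_0)}^2$ because $Q(v)=Y$; when $Y\le0$ the set $\M$ is a single ray---for $Y<0$ by the classical uniqueness up to scaling of the Yamabe minimizer, for $Y=0$ because the minimizers are exactly the positive multiples of the first eigenfunction of $-c_n\Delta+R_{g_0}$---so the ratio $\|v\|_{L^{2^*}}/\|v\|_{W^{1,2}}$ is a fixed positive constant. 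Combining this with the triangle inequality $\|u\|_{L^{2^*}}\ge\|v\|_{L^{2^*}}-\|u-v\|_{L^{2^*}}$ for a minimizer $v$ nearly realizing $d(u,\M)$ (so $\|u-v\|_{W^{1,2}}$ is small once $\e_0$ is, by Theorem~\ref{thm: minimizers}) and one further use of Sobolev yields the comparison.

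For \eqref{e: minimizers conf invar2} I would take $g_0=\bar g$ to be a unit-volume Yamabe minimizer, so that $R_{\bar g}\equiv Y$ and $\|g-\tilde g\|_*^2=\int_M c_n|\na(u-v)|^2+R_{\bar g}(u-v)^2\,d\vol_{\bar g}$ is the Dirichlet form of the conformal Laplacian evaluated at $u-v$; its conformal covariance yields the asserted independence of $\|\cdot\|_*$ from the choice of $\bar g$. If $Y>0$ this form is equivalent to $\|\cdot\|_{W^{1,2}(\bar g)}^2$ and the argument for \eqref{e: minimizers conf invar} goes through unchanged, the large-deficit regime now being handled using the hypothesis $\mathcal R_g-Y\le1$ together with the bound
\[
\inf_{\tilde g\in\M}\|g-\tilde g\|_*^2\ \le\ \int_M c_n|\na u|^2+Y\,u^2\,d\vol_{\bar g}\ =\ Q(u)\,\|u\|_{L^{2^*}(\bar g)}^2
\]
(obtained by testing with a single minimizer and letting its scale go to $0$), which forces the relevant ratio to be at most $\sqrt{Y+1}$. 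If $Y=0$ one may instead take $\bar g$ scalar-flat; then $\M$ is the ray of positive constants, $\inf_{\tilde g\in\M}\|g-\tilde g\|_*^2=c_n\|\na u\|_{L^2(\bar g)}^2=Q(u)\,\|u\|_{L^{2^*}(\bar g)}^2$, the squared ratio equals $\mathcal R_g-Y$ exactly, and \eqref{e: minimizers conf invar2} holds with $c=1$ and $\g=0$ whenever $\mathcal R_g-Y\le1$. Since none of these reductions changes the exponent, the statement that $\g$ may be taken to be $0$ on an open dense set of conformal classes is inherited verbatim from Theorem~\ref{thm: minimizers} (and from \eqref{eqn: BE} on the sphere). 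The step I expect to be most delicate is exactly the normalization comparison $\|u\|_{L^{2^*}}\sim\|u\|_{W^{1,2}}$ on a $W^{1,2}$-neighborhood of $\M$, which reduces, as above, to coercivity of the conformal Laplacian for $Y>0$ and to the explicit structure of $\M$ for $Y\le0$; the remaining inputs---that unit-volume Yamabe minimizers have constant scalar curvature $Y$ and that the conformal Laplacian's Dirichlet form is conformally covariant---are standard, and the rest is bookkeeping.
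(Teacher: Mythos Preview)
Your proposal is correct and follows essentially the same route as the paper: deduce the corollary from Theorem~\ref{thm: minimizers}, verify conformal invariance of $\|\cdot\|$ and $\|\cdot\|_*$ (the latter via conformal covariance of $L_g$), and pass from the $W^{1,2}$-normalized distance to the conformally invariant ones. Two remarks. First, you correctly flag the round-sphere case, which is excluded from Theorem~\ref{thm: minimizers} but not from the corollary; the paper's proof does not address this explicitly, and your appeal to \eqref{eqn: BE} is the right patch. Second, your ``normalization comparison'' $\|u\|_{L^{2^*}}\gtrsim\|u\|_{W^{1,2}}$ is exactly what the paper encapsulates in the one-line assertion ``the denominators are comparable when $d(u,\mathcal M)\le\delta_0$,'' and your case analysis on $\mathrm{sgn}\,Y$ is correct but a bit more elaborate than necessary: normalizing $\|u\|_{L^{2^*}}=1$, smallness of $Q(u)-Y$ bounds $\int c_n|\nabla u|^2+R_{g_0}u^2$, and since $\|u\|_{L^2}\le \vol_{g_0}(M)^{1/n}\|u\|_{L^{2^*}}$ by H\"older on a compact manifold, this directly bounds $\|u\|_{W^{1,2}}$ regardless of the sign of $Y$ or $R_{g_0}$. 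Your explicit computation in the $Y=0$ case for $\|\cdot\|_*$ (giving $c=1$, $\gamma=0$) is a nice sharpening not present in the paper.
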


\begin{remark}
	{\rm
	Observe that the denominator in \eqref{eqn: dist def} is the $W^{1,2}$ norm of $u$, not the $L^{2^*}$ norm of $u$. This normalization is due to the different scalings of the left- and right-hand sides of \eqref{e: minimizers} when $\g >0$. Correspondingly, \eqref{e: minimizers conf invar2} holds only when the deficit is not too large, since we have chosen to normalize the right-hand side by the geometric quantity $\vol_g(M)^{ 1/2^*}$. We thank Rupert Frank for bringing this to our attention.
	}
\end{remark}
 Notice that in Theorem~\ref{thm: minimizers} and Corollary~\ref{cor: conformal CONQUEST}, we obtain a quadratic stability estimate only for a generic set of metrics. This result is in fact sharp. Indeed, adapting an example of Schoen \cite{SchoenNumber} (see also \cite{CaChRu}), we show that there exist manifolds for which $\gamma > 0$ in \eqref{e: minimizers}, thus proving the optimality of the result.

\begin{theorem}[Super Quadratic Growth]\label{thm: superquadratic}
Let $n \geq 3$. There exist  $\gamma > 1$, a closed Riemannian manifold with analytic metric, $(M^n,g)$, a unique minimizer of the Yamabe energy $Q$ on $(M^n, g)$, which we set equal to $1$ (by a conformal change), and a sequence of $u_i \in W^{1,2}(M)$ with $u_i \rightarrow 1$ in $W^{1,2}$ such that 
\begin{equation}\label{e: superquadratic}
\lim_{i\rightarrow \infty} \frac{Q(u_i) - Y(M,[g])}{\|u_i - 1\|^{2+\gamma}_{W^{1,2}(M)}} = 0.
\end{equation}
\end{theorem}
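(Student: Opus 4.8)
The plan is to adapt Schoen's construction of manifolds with slowly-converging Yamabe minimizers, carrying the relevant estimates into the energy deficit. First, I would recall the mechanism behind Schoen's example in \cite{SchoenNumber}: one builds an analytic metric $g$ on $M$ for which the Yamabe minimizer $u_g \equiv 1$ is the unique minimizer but sits at a \emph{degenerate} critical point of $Q$, in the sense that the linearized Yamabe operator $L_g = -c_n\Delta_g + R_g$ has a nontrivial kernel beyond the directions forced by the normalization. More precisely, the second variation $\delta^2 Q$ at the minimizer is a nonnegative quadratic form whose kernel is strictly larger than the ``trivial'' part; since the metric is analytic, the full functional $Q$ then satisfies a \L ojasiewicz--Simon inequality of the form $|Q(u) - Y| \gtrsim \|u-1\|^{2+\gamma}$ with a sharp exponent $2 + \gamma$, $\gamma > 0$ determined by the order of vanishing of $Q - Y$ along the kernel direction. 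The point of Theorem \ref{thm: superquadratic} is to show the \emph{reverse} inequality degenerates, i.e. to exhibit a competitor sequence for which the deficit is genuinely smaller than $\|u_i-1\|^{2+\gamma}$.

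The key steps, in order:
\begin{enumerate}
\item[(1)] Set up Schoen's analytic manifold $(M^n,g)$ and verify that $u \equiv 1$ is the unique minimizer (after the stated conformal normalization), with $\vol_g(M) = 1$ and $R_g \equiv Y$ constant. This requires citing/adapting \cite{SchoenNumber} and \cite{CaChRu}.
\item[(2)] Identify a nonzero function $\phi \in C^\infty(M)$ lying in the kernel of the second variation of $Q$ at $1$ that is \emph{not} tangent to the manifold of minimizers $\M$ (equivalently, not removable by a reparametrization/conformal motion). Compute the Taylor expansion of $Q(1 + t\phi + w(t))$ where $w(t)$ is the higher-order correction solving the auxiliary equation that kills the non-degenerate directions (a Lyapunov--Schmidt reduction). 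Because $\phi$ is in the kernel, the quadratic term vanishes and the leading term is of order $t^{2+\gamma}$ for some $\gamma > 0$ equal to (order of vanishing) $- 2$.
\item[(3)] Here is where the \emph{strict} inequality in \eqref{e: superquadratic} is forced rather than an equality: choose $u_i = 1 + t_i \phi + w(t_i)$ along a carefully chosen direction $\phi$ \emph{within} the kernel for which the reduced function $g(t) := Q(1+t\phi+w(t)) - Y$ vanishes to order strictly greater than $2 + \gamma$ — i.e. $\phi$ is chosen to further kill the leading term of the reduced obstruction. Equivalently, pick $\phi$ in the zero set of the leading homogeneous polynomial governing $g$. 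Then $Q(u_i) - Y = o(t_i^{2+\gamma})$ while $\|u_i - 1\|_{W^{1,2}}^{2+\gamma} \sim (c|t_i|)^{2+\gamma}$, since $\|u_i-1\|_{W^{1,2}} = |t_i|\,\|\phi\|_{W^{1,2}} + O(t_i^2)$. Taking $t_i \to 0$ yields \eqref{e: superquadratic}.
\item[(4)] Finally verify the requirement $\gamma > 1$: this is arranged by choosing the specific dimension-dependent data in Schoen's construction (the relevant bubble concentration exponents) so that the order of vanishing of the reduced functional in the selected direction exceeds $3$, giving $\gamma > 1$ in \eqref{e: superquadratic}. Analyticity of $g$ guarantees the reduced function $g(t)$ is analytic in $t$, so its order of vanishing is well-defined and the limit in \eqref{e: superquadratic} is a genuine limit of the quotient, not merely a liminf.
\end{enumerate}

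The main obstacle I anticipate is Step (3): one must produce a direction $\phi$ in the second-variation kernel along which the \emph{reduced} (post--Lyapunov--Schmidt) functional vanishes to an order strictly larger than the generic order $2+\gamma$, while simultaneously keeping $\phi$ transverse to $\M$ so that $\|u_i - 1\|_{W^{1,2}} \not\to 0$ faster than $|t_i|$. This amounts to understanding the structure of the leading homogeneous obstruction polynomial on the (possibly multi-dimensional) kernel and finding a nontrivial zero of it that is not itself a trivial/gauge direction; controlling the Lyapunov--Schmidt remainder $w(t)$ to high enough order (so that it does not reintroduce a term of order $2+\gamma$) is the delicate part. A secondary obstacle is ensuring the competitors $u_i$ remain \emph{positive} (so that they are admissible in $Q(u) \ge Y$), which follows since $u_i = 1 + o(1)$ in, say, $C^0$ by elliptic regularity applied to the reduced equation — but this elliptic bootstrap must be checked to hold uniformly in $i$.
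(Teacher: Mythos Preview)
Your Lyapunov--Schmidt framework matches the paper's, but Step~(3) is both unnecessary and potentially unworkable, and Step~(4) misidentifies the example.

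On Step~(3): you fix $\gamma = p-2$ (where $p$ is the order of the first nonzero term $q_p$ in the reduced function) and then try to find a kernel direction $\phi$ lying in the zero set of $q_p$, so that the deficit is $o(t^p)$. But the theorem only asks for \emph{some} $\gamma>1$, not the sharp one. The paper instead takes the direction $v\in\mathbb{S}^{\ell-1}$ where $q_p$ attains its \emph{positive maximum} (the Adams--Simon condition $\mathrm{AS}_p$), sets $u_t=1+tv+F(tv)$, and observes $Q(u_t)-Y = q(tv)-q(0) = O(t^p)$ while $\|u_t-1\|_{W^{1,2}}\sim t$. Then for any $\gamma<p-2$ the ratio is $O(t^{p-2-\gamma})\to 0$. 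No zero of $q_p$ is needed; one simply chooses $\gamma\in(1,p-2)$, which is nonempty once $p\geq 4$. Your proposed route is also risky: since $1$ is a minimizer, $q_p\geq 0$ on $\mathbb{S}^{\ell-1}$, and there is no a priori reason $q_p$ has a nontrivial zero there (and certainly none if $\ell=1$). The ``delicate part'' you flag is a problem you created by fixing $\gamma=p-2$ rather than taking $\gamma$ slightly smaller.

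On Step~(4): the example is not a bubble concentration construction. The paper cites the product $\mathbb{S}^1(1/\sqrt{n-2})\times\mathbb{S}^{n-1}(1)$ from \cite{CaChRu} (after \cite{SchoenNumber}), which is a nonintegrable \emph{minimizer} satisfying $\mathrm{AS}_p$ with $p\geq 4$; the degeneracy and the order $p$ come from the spectrum of the Jacobi operator on the $\mathbb{S}^1$ factor at the critical radius, not from bubbling exponents. This is what delivers $\gamma>1$ (indeed any $\gamma<2$).
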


In fact, adapting the aforementioned examples from \cite{SchoenNumber, CaChRu}, we will produce an example such that \eqref{e: superquadratic} holds for any $\gamma < 2$. It is an interesting question whether for every $\gamma>0$ one can find a metric $g_\gamma$ which satisfies \eqref{e: superquadratic}, as in the case of the quantitative isoperimetric inequality on a Riemannian manifold, \cite{ChEnSp}. 

\subsection{Background on Quantitative Stability and the Yamabe Functional}

The problem of establishing quantitative stability estimates for functional and geometric inequalities has been a topic of extensive study in recent years. For instance, sharp quantitative estimates have been established for the isoperimetric inequality on Euclidean space \cite{FMP, FiMaPr, CiLe}, the round sphere \cite{BDF}, hyperbolic space \cite{BDS}, and on arbitrary Riemannian manifolds \cite{ChEnSp}. Closely related to the Yamabe problem, quantitative stability estimates for Sobolev inequalities on Euclidean space have been studied, in addition to the aforementioned result of \cite{BianchiEgnell91}, in \cite{CFMP, FiMPsob, FN, Neumayer, FiZhang, hynd2019symmetry}. In a slightly different direction, quantitative stability estimates for critical points have been addressed for the isoperimetric inequality on Euclidean space \cite{CM, KM} and for the Sobolev inequality \cite{CFM, FGcps}.  Quantitative stability estimates have wide-ranging applications to contexts including characterization of minimizers in variational problems \cite{CicaleseSparado13}, rates of convergence of PDE \cite{CarlenFigalli}, regularity of interfaces in free boundary problems \cite{AKN1}, and even data science \cite{GTMT}.  Apart from \cite{ChEnSp}, all of these results make crucial use of the explicit form of minimizers and critical points or of the symmetries of the ambient space. See \cite{FuSurvey} for a survey of quantitative stability results for functional and geometric inequalities.

Critical points of the volume-normalized Einstein-Hilbert action functional, $\mathcal{R}(g) = \vol_g(M)^{-2/2^*}\int_{M} R_g\,d\vol_g$,
are Einstein metrics, i.e. metrics $g$ satisfying $\Ric_g = \lambda g$ for some $\lambda \in \R$ where $\Ric_g$ is the Ricci curvature tensor of $g$. The Yamabe functional $Q$ defined in \eqref{eqn: Yamabe functional} is the restriction of this functional (and thus the corresponding variational problem) to a given conformal class $[g]$.
If $Y(M, [g]) \leq 0$, then the Euler-Lagrange equation corresponding to the Yamabe functional \eqref{eqn: Yamabe functional} (see \eqref{main} below) satisfies the maximum principle and thus there is a unique critical Yamabe metric. Similarly, if the conformal class $[g]$ has a representative that is an Einstein metric and is not conformal to the round sphere, then this metric is the unique critical Yamabe metric thanks to a theorem of Obata \cite{Obata}. On the round sphere $(S^n, g_0)$, the family of minimizing Yamabe metrics is noncompact, though for any closed Riemannian manifold that is not conformal to the round sphere, the family of unit-volume minimizers is compact in the $C^2$ topology (see Lemma~\ref{lem: cpt}). In fact, Anderson \cite{And} showed that for an open dense set in the space of conformal classes, $[g]$ has a unique (unit volume) minimizing Yamabe metric. 
In general, however, minimizing Yamabe metrics are non-unique; see \cite{SchoenNumber, SchYamSpheres}. 
 Pollack \cite{Pollack1, PollackThesis} showed that, for any $N\in \mathbb{N}$, the set of conformal classes containing at least $N$ critical Yamabe metrics is dense in the $C^0$ norm on the space of conformal classes with positive Yamabe constant. Suitably normalized families of critical points of $Q$ are  compact in the $C^2$ topology for $n\leq 24$ \cite{SchoenNumber, LiZhu, Druet, KMS}, while compactness may fail for $n \geq 25$  \cite{BrendleNonCpt, BrendleMarques} or when the metrics are non-smooth \cite{BerMal}.
 Further related areas of study include the Yamabe problem on compact manifolds with boundary \cite{Escobar1, Escobar2} and the Yamabe flow \cite{BrendleFlow1, BrendleFlow2, CaChRu}.
For further literature review on the Yamabe problem, we refer the reader to \cite{DruetHebey, BrendleMarquesSurvey}.

\subsection{Description of the proof} The proof of Theorem \ref{thm: minimizers} makes use of the so-called \L ojasiewicz inequality, while the generic statement follows from the fact that for generic conformal classes of metric on a given manifold critical points of the Yamabe functional are non-degenerate. By non-degenerate, we mean that the second variation of the Yamabe functional has trivial kernel.  The (distance) \L ojasiewicz inequality originates in real analytic geometry, and roughly says that a real analytic function $q: \R^k \to \R$ grows at least like a power of the distance to the nearest critical point (or to a given level set) of $q$; see Lemma~\ref{lem: lojdistance} below. We apply the \L ojasiewicz inequality to the restriction of the Yamabe energy $Q$ to the kernel of the second variation. In doing so are able to show that in a neighborhood of $v$, the Yamabe energy grows away from $\mathcal{M}$ at least like a power of the distance $d(u, \mathcal{M})$. The connection between the \L ojasiewicz inequality and quantitative stability inequalities was first introduced in \cite{ChEnSp} for the isoperimetric problem.

We remark that, in contrast to \cite{ChEnSp}, our main theorems do not require the analyticity of the metric. This distinction arises from the difference between the area functional considered in \cite{ChEnSp} and the Yamabe functional considered here, namely, that on any closed Riemannian manifold $(M,g)$, the Yamabe functional is an analytic map with respect to $u\in W^{1,2}(M)$ in the sense of \cite[Definition 8.8]{ZeidlerBook}; see \cite[Lemma 6]{CaChRu}. This analyticity allows us to apply the \L ojasiewicz inequality.  While the ``gradient-\L ojasiewicz" inequality has been used before to study Yamabe flows (c.f. \cite{CaChRu, BrendleFlow1}), our paper is the first use of the ``distance-\L ojasiewicz" inequality in the Yamabe literature of which we are aware.

The proof of Theorem \ref{thm: superquadratic} exploits ideas of Adam-Simon \cite{AdSi}, where the notion of Adam-Simon condition of order $p$ was introduced, together with the examples constructed in \cite{CaChRu}.

\subsection{Acknowledgments}
  M.\ Engelstein was partially supported by a NSF DMS 2000288. R.\ Neumayer was partially supported by NSF DMS 1901427 and NSF RTG 1502632. L.\ Spolaor was partially supported by NSF DMS 1951070. 
This project was begun while M.\ Engelstein was visiting Chicago for the AY 2019-2020. He thanks the University of Chicago and especially Carlos Kenig for their hospitality. We thank Otis Chodosh, Dan Pollack, and Rupert Frank for useful conversations about this work. We also thank an anonymous referee for their helpful comments on an earlier draft of this manuscript.

\section{Properties of the Yamabe Energy and Lyapunov-Schmidt reduction}\label{s: prelims}

Throughout, we fix a background metric $g \in [g]$. This conformal representative $g$ is implicit in the definition of the Sobolev function spaces. However, as we saw in Section \ref{s:intro}, our end results in Corollary~\ref{cor: conformal CONQUEST} will be independent of the choice of conformal representative.

Recall the Yamabe energy: 

$$Q(u) =\ \frac{ \int_M c_n |\na u|^2 + R_gu^2 \, d\vol_g}{\| u\|_{L^{2^*}(M)}^2}.$$

 A non-negative critical point $u$ of $Q$ is a non-negative smooth solution of the nonlinear eigenvalue problem
\begin{equation}\label{main}
-c_n \Delta u+ R u = \lambda u^{2^*-1},
\end{equation}
where the value of $\lambda$ is given by $\lambda = Q(u) \| u\|_{L^{2^*}(M)}^{2-2^*}$. We will denote by $\mathcal{CSC}([g]) \subset W^{1,2}(M)$ the set of all critical points in a given conformal class $[g]$, i.e. solutions to \eqref{main} for some $\lambda \in \mathbb R$. As usual, we will omit the dependence on the conformal class when clear from the context.

Although $Q(cu ) = Q(u)$ for any $c >0$,  it will often be easier to work with functions that have $L^{2^*}$ norm equal to  $1$. To that end we introduce the following Banach manifold:

\begin{equation}\label{e:defofB}
 \B = \left\{u\in W^{1,2}(M ; \R_+) \mid \int_M u^{2^*}\ d\vol_g = 1\right\}.\end{equation}

{ Note that the collection of metrics represented by \eqref{e:defofB} is conformally invariant; this can be seen in the equivalent condition that the metric $g_u = u^{4/(n-2)}g$ has unit volume.} 

\begin{lemma}[Banach manifold of metrics of volume $1$]\label{lem:bm}
	The set $\B\subset W^{1,2}(M)$ is a Banach manifold, and for every $v\in \B$ the tangent space to $\B$ is given by
	$$
	T_v \B = \left\{u \in W^{1,2}(M) \mid  \int_{M} v^{2^*-1} u \, d \vol_g =0\right\}\,.
	$$
We will denote by $\pi_{T_v\B}$ the $L^{2}$-orthogonal projection onto $T_v\B$. In particular, for every $u\in \B$ the second variation of $Q$ on $\B$ is given by
	\begin{equation}\label{e:secondvariation1}
	\begin{aligned}
	\frac{1}{2}\nabla_\B^2 Q(u)[\varphi,\eta]
	 =& \int_M \left\{ c_n \,\na \pi_{T_u\B}\varphi\,\cdot \na \pi_{T_u\B}\eta + R_g\,(\pi_{T_u\B}\varphi) \,(\pi_{T_u\B}\eta) \right\}\, d\vol_g\\ -&(2^*-1)Q(u)\int_{M} u^{2^*-2}\, (\pi_{T_u\B}\varphi) \,(\pi_{T_u\B}\eta)\, d\vol_g.\end{aligned}
	\end{equation}
 for all $ \varphi,\eta\in W^{1,2}(M)$.	We will often omit the projection maps when we are doing computations with $\nabla_{\B}^2Q$.

In the special case that $g$ is a metric of constant curvature with volume 1 and $u = 1$ we have the formula (omitting the projection maps):

\begin{equation}\label{e:secondvariation2}
\frac{1}{2}\nabla_\B^2Q(1)[\varphi, \eta] =\frac{4}{n-2}\int -(n-1)(\Delta \varphi)\eta -R_g\varphi \eta\, d\vol_g.
\end{equation}

 Moreover, the following properties hold.
	\begin{enumerate}
		\item  The function $w \mapsto \frac{\nabla_{\B}^2 Q(w)[\eta, -]}{\|\eta\|_{C^{2,\alpha}}}$ is a continuous function from $C^{2,\alpha}\cap \B\rightarrow C^{0,\alpha}$ with a modulus of continuity uniform over $\eta \in C^{2,\alpha}$. 
		\item The function $w\mapsto  \frac{\nabla^2_{\B}Q(w)[\eta, \xi]}{\|\eta\|_{W^{1,2}} \|\xi\|_{W^{1,2}}}$ is a continuous function from $\B \rightarrow \mathbb R$ with modulus of continuity uniform over $\xi, \eta \in W^{1,2}$. 
	\end{enumerate}
\end{lemma}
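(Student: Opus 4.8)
The plan is to establish the Banach manifold structure by an implicit function theorem argument, compute the second variation by direct differentiation along curves in $\B$, and then verify the continuity properties (1)--(2) by inspecting the explicit formula \eqref{e:secondvariation1}. First I would observe that $\B$ is the level set $\{F = 1\}$ of the $C^\infty$ (indeed analytic) functional $F(u) = \int_M u^{2^*}\, d\vol_g$ on the open cone $W^{1,2}(M;\R_+)$; here one uses the Sobolev embedding $W^{1,2}(M)\hookrightarrow L^{2^*}(M)$ to see that $F$ is well-defined and smooth, with differential $dF_v[\varphi] = 2^* \int_M v^{2^*-1}\varphi\, d\vol_g$. Since $v \in \B$ is positive and bounded below on the compact manifold $M$, $dF_v$ is a nonzero (hence surjective, as the target is $\R$) bounded linear functional on $W^{1,2}(M)$, so $1$ is a regular value and $\B$ is an embedded Banach submanifold with $T_v\B = \ker dF_v = \{u : \int_M v^{2^*-1} u\, d\vol_g = 0\}$, as claimed. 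The $L^2$-orthogonal projection $\pi_{T_v\B}$ is then just subtraction of the component along $v^{2^*-1}$, normalized appropriately.

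Next I would compute the second variation. The cleanest route is to write $Q|_\B$ as the numerator $N(u) = \int_M c_n|\na u|^2 + R_g u^2\, d\vol_g$ restricted to $\B$ (since the denominator is identically $1$ on $\B$), but one must be careful: $\na^2_\B Q$ is the Hessian of $Q$ as a function on the manifold $\B$, which differs from the naive second derivative of $N$ because of the constraint. Concretely, for $\varphi, \eta \in T_u\B$ I would take a two-parameter variation $u(s,t)$ in $\B$ with $\pa_s u|_0 = \varphi$, $\pa_t u|_0 = \eta$, differentiate $Q(u(s,t))$ twice, and use the first-order criticality information together with the constraint $\int_M u(s,t)^{2^*} = 1$ to identify the correction terms; the Lagrange-multiplier structure of \eqref{main} produces exactly the term $-(2^*-1)Q(u)\int_M u^{2^*-2}\varphi\eta\, d\vol_g$. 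For general $\varphi,\eta \in W^{1,2}(M)$ one then precomposes with $\pi_{T_u\B}$, which is the meaning of the projections in \eqref{e:secondvariation1}. The special formula \eqref{e:secondvariation2} follows by specializing to $g$ of constant scalar curvature with $\vol_g(M)=1$ and $u\equiv 1$: here $Q(1) = \int_M R_g\, d\vol_g = R_g$ (a constant), $\int_M 1^{2^*-1}\varphi = \int_M\varphi$, $2^* - 1 = (n+2)/(n-2)$, and $c_n = 4(n-1)/(n-2)$, so collecting the terms $c_n\int \na\varphi\cdot\na\eta - [(2^*-1)R_g - R_g]\int\varphi\eta = \tfrac{4}{n-2}\int -(n-1)(\Delta\varphi)\eta - R_g\varphi\eta$ after an integration by parts (the projection corrections vanish on the constant-curvature background because $\int_M \varphi\, d\vol_g$ interacts trivially with the constant $u^{2^*-2}\equiv 1$, modulo a term that is absorbed — this bookkeeping is the one slightly delicate point and is why the lemma explicitly says ``omitting the projection maps'').

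Finally, for the continuity assertions (1) and (2): in \eqref{e:secondvariation1}, fixing $\eta$, the map $w \mapsto \na^2_\B Q(w)[\eta,-]$ depends on $w$ only through the coefficients $R_g$, $Q(w)$, $w^{2^*-2}$ and the projection $\pi_{T_w\B}$. The projection depends continuously (in operator norm on the relevant spaces) on $w^{2^*-1}$, and $Q(w)$ is continuous; the only genuinely nonlinear ingredient is $w \mapsto w^{2^*-2}$. For (1) one works in $C^{2,\alpha}\cap\B$, where $w \mapsto w^{2^*-2}$ is continuous into $C^{0,\alpha}$ with a modulus depending only on a $C^0$ bound for $w$ (uniform on the manifold since $\B$-elements of bounded $C^{2,\alpha}$ norm are uniformly bounded and bounded below), and the pairing against $\eta \in C^{2,\alpha}$ and integration against the fixed metric land in $C^{0,\alpha}$; dividing by $\|\eta\|_{C^{2,\alpha}}$ makes the modulus uniform in $\eta$ by bilinearity. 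For (2) one uses instead the Sobolev embedding $W^{1,2}\hookrightarrow L^{2^*}$ and Hölder: the term $\int_M w^{2^*-2}\varphi\eta$ is controlled by $\|w\|_{L^{2^*}}^{2^*-2}\|\varphi\|_{L^{2^*}}\|\eta\|_{L^{2^*}} \lesssim \|\varphi\|_{W^{1,2}}\|\eta\|_{W^{1,2}}$ since $(2^*-2)/2^* + 1/2^* + 1/2^* = 1$, and continuity in $w$ follows from continuity of $w\mapsto w^{2^*-2}$ from $L^{2^*}$ into $L^{2^*/(2^*-2)}$ (here $w\in\B$ so $\|w\|_{L^{2^*}}=1$ is fixed, which is what gives the uniform modulus after dividing by $\|\varphi\|_{W^{1,2}}\|\eta\|_{W^{1,2}}$).

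I expect the main obstacle to be the bookkeeping in the second step: correctly accounting for the constraint when passing from the second derivative of the numerator $N$ to the intrinsic Hessian $\na^2_\B Q$ on the manifold, and in particular verifying that the projection corrections collapse to exactly the stated formulas (especially the clean form \eqref{e:secondvariation2}). The manifold structure in the first step and the continuity estimates in the last step are routine applications of the implicit function theorem and of Sobolev/Hölder inequalities, respectively; the genuinely careful work is the variational computation, where one must decide once and for all how the $L^2$-projection $\pi_{T_u\B}$ enters and then be consistent about ``omitting'' it in later computations.
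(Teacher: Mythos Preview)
Your proposal matches the paper's proof in structure and content for the manifold assertion and for the continuity claims (1)--(2): the paper also uses the regular-value/submersion argument, and also verifies continuity by estimating the dependence of $L_w\varphi := -c_n\Delta\varphi + R_g\varphi - (2^*-1)Q(w)w^{2^*-2}\varphi$ on $w$ in $C^{0,\alpha}$ and in $H^{-1}$ (via exactly the $L^{n/2}$--$L^{2^*}$--$L^{2^*}$ H\"older splitting you describe), together with continuity of $w\mapsto\pi_{T_w\B}$.

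The one place your route diverges is the second-variation computation, and here there is a small but real gap. The paper does \emph{not} restrict to $\B$ first: it differentiates the full zero-homogeneous quotient $Q(u+t\phi)$ twice in the ambient $W^{1,2}$, obtaining
\[
\nabla^2 Q(u)[\phi,\phi] = 2\!\int\!\big(c_n|\nabla\phi|^2 + R_g\phi^2\big) - 2(2^*-1)Q(u)\!\int\! u^{2^*-2}\phi^2 + \Big(\!\int u^{2^*-1}\phi\Big)\,\mathcal G(u,\phi),
\]
and then restricts to $\phi\in T_u\B$, which kills the last term and yields \eqref{e:secondvariation1} for \emph{every} $u\in\B$, with no appeal to criticality. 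Your proposed route---curves inside $\B$ (where $Q=N$), then ``the Lagrange-multiplier structure of \eqref{main}''---tacitly assumes $u$ solves \eqref{main}: along a curve $u(t)\subset\B$ one picks up the extra term $\nabla N(u)[u''(0)] = 2\int(-c_n\Delta u + R_g u)\,u''(0)$, and the constraint only pins down $\int u^{2^*-1}u''(0)$; unless $-c_n\Delta u + R_g u$ is a scalar multiple of $u^{2^*-1}$ this does not collapse to the clean correction $-2(2^*-1)Q(u)\int u^{2^*-2}\phi^2$, and the resulting expression depends on the choice of curve. Since the lemma is stated---and is later used, at the non-critical intermediate point $\zeta$ in the Taylor expansion of Proposition~\ref{lem: Fuglede} and in the continuity statement (2)---for general $u\in\B$, the paper's ambient-then-restrict device is precisely the missing ingredient you flagged as ``the main obstacle.''
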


\begin{proof} 
Since $W^{1,2}(M)$ is separable, to check that $\B$ is a Banach submanifold of $W^{1,2}(M)$, it suffices to check that the function $G\colon W^{1,2}(M)\to \R$ defined by 
$G(u):= \int_M u^{2^*}\ d\vol_g - 1$ 
is a submersion in a neighborhood of every point $v\in W^{1,2}(M)$. 

This is an easy exercise, since we have
	$$
	DG(v)[\varphi]:=\int_M v^{2^*-1}\varphi\,d\vol_g \qquad \forall \varphi\in W^{1,2}(M)\,,
	$$ 
	so that choosing $\varphi=v$ (or $\varphi=1$ since $v>0$ anyway) we get $DG(v)[\varphi]=1\neq 0$. In the sequel, given $v \in\B$, we will denote by $L_v$ the linear (continuous) operator on $W^{1,2}$ defined by $L_v:=DG(v)$ and by $T_v\B$ the tangent space to $\B$ at $v$, which is a codimension $1$ subspace of $W^{1,2}$ defined by
	$$
	T_v \B = \left\{u \in W^{1,2}(M)\mid L_v u =0\right\}.
	$$ 
	Define 

	the orthogonal projection $\pi_{T_v\B} : W^{1,2}(M) \to T_v\B\subset W^{1,2}(M)$  by
	\[
	\pi_{T_v\B} u = u - \left(\int v^{2^*-1} u\right)v\,.
	\]
	Let 
	 us denote
	$$
	\mathcal E(u):=\int_M c_n |\nabla u|^2+R_gu^2\,d\vol_g\,,
	$$ 
	and observe that if $u\in \B$, then $\mathcal E(u)=Q(u)$. For $u\in \B$ and $\varphi \in W^{1,2}(M)$,  we can compute the first variation of $Q$ at points of $\B$ to be:
	\begin{equation}\label{e:firstvariation}\begin{aligned}
	\nabla Q(u)[\phi] &:= \frac{d}{dt}(Q (u + t\phi))\Big|_{t=0}\\ 
	&=\left( [\vol(M, g_{u+t\phi})]^{-2/2^*}\, 2\left[\int_M(c_n( \nabla u\cdot\nabla \phi+\frac{t}{2}\,|\nabla \phi|^2)+R_g(u\,\phi+\frac{t}{2}\phi^2))\,d\vol_g\right]\right.\\
	 &\left.  -2\,\left[\vol(M, g_{u+t\phi}) \right]^{-(2+2^*)/2^*}\, \left[\int_M (u+t\phi)^{2^*-1} \,\phi \,d\vol_g\right]  \,\mathcal E(u+t\phi)\right)\Big|_{t=0}
	\\
	&= 2 \int_{M} \left(-c_n\Delta u + R_gu-Q(u)u^{2^*-1}\right)\phi\, d\vol_g\,,
	\end{aligned}\end{equation} 
	so that in particular, when restricted to the tangent space of $\B$, we have
	$$
	\nabla_{\B} Q(u)[\varphi] = 2\int_{M} \left(-c_n\Delta u + R_gu\right)\pi_{T_u\B}\varphi\, d\vol_g\,.
	$$

	Differentiating \eqref{e:firstvariation} we obtain
	\begin{equation}\label{e:secondvariation}\begin{aligned}
	\nabla^2 Q(u)[\phi,\phi] &:= \frac{d^2}{dt^2}\left(Q (u + t\phi)\right)\Big|_{t=0}\\ 
	& = 2 \int_M \left(c_n\,|\nabla \phi|^2+R_g\,\phi^2\right)\,d\vol_g-2\,(2^*-1)\,Q(u)\,\,\int_M u^{2^*-2}\,\phi^2\,d\vol_g\\
	&+\left(\int_M u^{2^*-1}\,\phi\,d\vol_g\right)\cdot \mathcal G(u,\phi)\,,
	\end{aligned} \end{equation}
	for some smooth function $\mathcal G$. Restricting to $T_u\B$ so that $\int_{M}u^{2^*-1}\, \phi\, d\vol_g = 0$, we exactly obtain \eqref{e:secondvariation1}. After observing that $Q(1) = R_g$ (which is constant) when $g$ is a metric of constant curvature and volume equal to $1$, \eqref{e:secondvariation2} follows from some arithmetic.

	To conclude the proof, let $L_u \vphi = -c_n \Delta \vphi + R_g \vphi - (2^*-1)Q(u)u^{2^*-2}\vphi$. Then we can see that $$\|L_u\vphi - L_v\vphi\|_X \leq C\left(\|u^{2^*-2}\vphi\|_X |Q(u)- Q(v)| + \|\vphi |u^{2^*-2}-v^{2^*-2}|\|_X\right),$$ where $X$ is either the $C^{0,\alpha}$ or $H^{-1}$ norm. If $X = C^{0,\alpha}$, then we recall the continuity of $Q(-)$ in $C^{2,\alpha}$ and note that $x \mapsto x^{2^*-2}$ is continuous to get that $$\|L_u\vphi - L_v\vphi\|_{C^{0,\alpha}} \leq \omega(\|u-v\|_{C^{2,\alpha}})\|\vphi\|_{C^{2,\alpha}},$$ for some modulus of continuity $\omega$.
	
	Similarly if $X = H^{-1}$ we observe that $Q(-)$ is continuous with respect to $u\in W^{1,2}$. Furthermore $\|u^{2^*-2}\vphi\|_{H^{-1}} \leq \|u^{2^*-2}\|_{L^{n/2}}\|\vphi\|_{L^{2^*}}\leq C(\|u\|_{L^{2^*}}) \|\vphi\|_{W^{1,2}}$ and similarly 
	$$\||u^{2^*-2}-v^{2^*-2}|\vphi\|_{H^{-1}} \leq \|u^{2^*-2}-v^{2^*-2}\|_{L^{n/2}}\|\vphi\|_{L^{2^*}}\leq \omega(\|u-v\|_{W^{1,2}}) \|\vphi\|_{W^{1,2}},$$ for some modulus of continuity $\omega$. 
	
Thus to finish the proof of the result, it suffices to show that the map $w \mapsto \pi_{T_w\B}$ is a continuous function from $C^{2,\alpha}\cap \B\rightarrow \mathbf B(C^{2,\alpha}, C^{2,\alpha})$ (or that it is a continuous function from $\B \rightarrow \mathbf B(W^{1,2}, W^{1,2})$). \\
	The triangle inequality shows that $$\left|\int_{M}u^{2^*-1}\eta\, d\vol_g-\int_{M}w^{2^*-1}\eta\, d\vol_g \right| \leq C\|\eta\|_{C^{2,\alpha}}\|w-u\|_{C^{0,\alpha}(M)}.$$ Thus the projection has the desired continuity in the H\"older setting. 
	
	Similarly, H\"older's inequality and the Sobolev embedding $W^{1,2} \hookrightarrow L^{2^*}$ imply that $u \mapsto L_u$ is a continuous function from $W^{1,2} \rightarrow (W^{1,2})^*$, which implies that the projection has the desired continuity in the Sobolev setting. 
\end{proof}

It will be useful to have two additional definitions. First, given a function $v \in \B$, we let $\B(v,\delta)$ denote the $W^{1,2}(M)$ ball of radius $\delta$ centered at $v$ inside of $\B$, i.e.
\begin{equation}\label{notation W12 nbhd, no volume constraint}
	\B(v,\delta) = \{ u \in \B \ \mid \ \| u-v\|_{W^{1,2}(M)}\leq \delta\}.
\end{equation}

Second, we let $\M_1 := \M \cap \B$ and $\mathcal{CSC}_1:=\mathcal{CSC}\cap \B$, that is respectively the minimizers and critical points to the Yamabe functional with $2^*$-norm equal to one.

\subsection{Lyapunov-Schmidt Reduction} 
The following technical result will be key to proving Proposition~\ref{lem: Fuglede}. 
Briefly, Lemma \ref{lem: LS reduction}, called a Lyapunov-Schmidt reduction, see e.g. \cite{Simon0}, splits any perturbation of a critical point into a portion that quantitatively changes the energy to second order and a portion that lies inside of a finite dimensional subspace (which can be dealt with using the \L ojasiewicz inequalities \cite{Loj}).

Given $v \in \mathcal{M}_1$, we let $K= \ker \nabla^2_{\B} Q(v)[-,-]\subset T_v\B$, thinking of the latter as a an operator from $T_v\B\subset W^{1,2}(M) \rightarrow H^{-1}(M)$. Since $\nabla^2_{\B}$ is generated by an elliptic operator on a compact manifold we know $\dim K := l < \infty$. We let $K^\perp$ denote the orthogonal complement of $K$ in $W^{1,2}(M)$ with respect to the $L^2$ inner product. 

\begin{lemma}[Lyapunov-Schmidt Reduction]\label{lem: LS reduction}
	Let $(M,g)$ be a closed Riemannian manifold with $g\in C^3$ and fix $v \in \mathcal{\M}_1$. There is a open neighborhood $U\subset K$ of $0$ in $K$
	and a map 
	\[
	F: U \to K^{\perp} 
	\]
	 with $F(0)=0$ and $\nabla F(0)=0$ satisfying the following properties.
\begin{enumerate}
	\item\label{item: LS property 1} Let $q: U\to \R$ be the function defined by $q(\vphi)= Q(v+ \vphi + F(\vphi))$.  Then we have 
	\begin{equation}\label{eqn: LS proj sat volume constraint}
	\mathcal L := \{v+\vphi + F(\vphi)\mid \vphi \in U\} \subset \B
	\end{equation} and 
	\begin{equation}\label{eqn: finite dim}
	\begin{split}
		\nabla_\B Q(v+ \vphi + F(\vphi)) &= \pi_{K} \nabla_\B Q(v+ \vphi + F(\vphi)) \\
		&= \na q(\vphi).	
	\end{split}	
	\end{equation}
	Furthermore, $\vphi \mapsto q(\vphi)$ is real analytic.

\item\label{item LS 2 W12 nbhd} There exists $\delta>0$  depending on $v$ such that for any $u\in \B(v,\delta)$, we have $\pi_K (u-v) \in U$. Furthermore, if $ u \in \mathcal{CSC}_1 \cap \B(v,\delta)$, then 
\begin{equation}\label{eqn: cp in L 1}
 u = v+\pi_K(u-v) + F(\pi_K(u-v)).	
\end{equation}

\item\label{item: LS F estimate}  There exists $C$ such that for all $ \vphi \in  U$ and $\eta \in K$, we have
		\begin{equation}\label{eqn: estimates for DF}
	\begin{split}
\| \nabla F(\vphi)[\eta] \|_{C^{2,\alpha}}  &\leq C \|\eta\|_{C^{0,\alpha}}\,.
	\end{split}
	\end{equation}
\end{enumerate}
\end{lemma}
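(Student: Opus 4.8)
The plan is to carry out a standard Lyapunov--Schmidt reduction adapted to the Banach manifold $\B$, working near a fixed minimizer $v \in \M_1$, and to extract the stated quantitative bounds from the implicit function theorem together with elliptic regularity. First I would set up the auxiliary equation. Writing a general element of $\B$ near $v$ as $v + \vphi + w$ with $\vphi \in K$ small and $w \in K^\perp$, I want to solve $\pi_{K^\perp} \nabla_\B Q(v + \vphi + w) = 0$ for $w = F(\vphi)$. Concretely, $\nabla_\B Q$ is (up to the projection $\pi_{T_u\B}$ and a harmless factor) the map $u \mapsto -c_n\Delta u + R_g u - Q(u) u^{2^*-1}$; its linearization at $v$ restricted to $T_v\B$ is $\nabla_\B^2 Q(v)$, whose kernel is exactly $K$ by definition, so $\pi_{K^\perp}\nabla_\B^2 Q(v)$ is an isomorphism from $K^\perp \cap T_v\B$ to its image. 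Since $\nabla_\B^2 Q(v)$ is generated by an elliptic operator on a closed manifold, Schauder theory gives that this isomorphism is bounded with bounded inverse in the $C^{2,\alpha} \to C^{0,\alpha}$ scale (and likewise $W^{1,2} \to H^{-1}$). I would then apply the implicit function theorem for real-analytic maps between Banach spaces (the analyticity of $Q$ on $W^{1,2}$, hence on the Hölder spaces, is quoted from \cite{CaChRu}) to produce a real-analytic $F: U \to K^\perp$ on a neighborhood $U \subset K$ of $0$, with $F(0) = 0$, and $\nabla F(0) = 0$ because the linearization of the auxiliary equation at $v$ has no $\vphi$-dependence to first order (the mixed second variation $\nabla_\B^2 Q(v)[\eta, \cdot]$ vanishes against $K^\perp$ for $\eta \in K$, which is precisely what the kernel condition says).

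Next I would verify item \eqref{item: LS property 1}. The inclusion $\mathcal L \subset \B$ should be arranged from the start by building the constraint $G(u) = 0$ into the functional-analytic setup --- i.e. one solves the auxiliary equation within $T_v\B$ and then corrects the $v$-component so that $v + \vphi + F(\vphi)$ lies on $\B$; alternatively one works directly with $\nabla_\B Q = \pi_{T_u\B}(\cdots)$ throughout. The identity \eqref{eqn: finite dim} is the heart of the reduction: by construction $\pi_{K^\perp}\nabla_\B Q(v + \vphi + F(\vphi)) = 0$, so $\nabla_\B Q = \pi_K \nabla_\B Q$ there; and the chain rule applied to $q(\vphi) = Q(v + \vphi + F(\vphi))$ gives $\nabla q(\vphi)[\eta] = \nabla_\B Q(v+\vphi+F(\vphi))[\eta + \nabla F(\vphi)[\eta]] = \nabla_\B Q(v + \vphi + F(\vphi))[\eta]$ since $\nabla F(\vphi)[\eta] \in K^\perp$ annihilates $\nabla_\B Q$ on $\mathcal L$ --- so $\nabla q(\vphi) = \pi_K \nabla_\B Q(v + \vphi + F(\vphi))$, using the $L^2$-orthogonality of $K$ and $K^\perp$. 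Real-analyticity of $q$ follows from that of $Q$ and of $F$.

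For item \eqref{item LS 2 W12 nbhd} I would argue as follows: if $u \in \B(v,\delta)$ then $\pi_K(u-v)$ is small in $W^{1,2}$, hence (since $K$ is finite-dimensional, so all norms on it are equivalent) small in every norm, in particular lies in $U$ for $\delta$ small. If moreover $u \in \mathcal{CSC}_1$, then $\nabla_\B Q(u) = 0$, and elliptic regularity bootstraps $u$ to $C^{2,\alpha}$ with a uniform bound (all critical points with $L^{2^*}$-norm $1$ lie in a fixed $C^{2,\alpha}$ ball --- this can be cited or quickly justified from \eqref{main}), so $u - v$ is small in $C^{2,\alpha}$ too. Then $u$, $\vphi := \pi_K(u-v)$, and $w := \pi_{K^\perp}(u-v)$ satisfy $\pi_{K^\perp}\nabla_\B Q(v + \vphi + w) = 0$, and by the uniqueness clause of the implicit function theorem (for $\delta$ small enough that $(\vphi, w)$ lies in the graph neighborhood), $w = F(\vphi)$, which is \eqref{eqn: cp in L 1}. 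Finally, item \eqref{item: LS F estimate}: differentiating the auxiliary equation $\pi_{K^\perp}\nabla_\B Q(v + \vphi + F(\vphi)) = 0$ in $\vphi$ in the direction $\eta \in K$ gives $\pi_{K^\perp} D[\nabla_\B Q](v+\vphi+F(\vphi))[\eta + \nabla F(\vphi)[\eta]] = 0$, i.e. $\nabla F(\vphi)[\eta] = -(\pi_{K^\perp}D[\nabla_\B Q])^{-1}\pi_{K^\perp}D[\nabla_\B Q][\eta]$ where the inverse is taken on $K^\perp$; the operator $D[\nabla_\B Q](w)$ is, by Lemma~\ref{lem:bm}(1), a continuous family of second-order elliptic operators, uniformly invertible on $K^\perp$ near $v$ in the $C^{2,\alpha} \leftrightarrow C^{0,\alpha}$ scale (shrinking $U$ if needed), so $\|\nabla F(\vphi)[\eta]\|_{C^{2,\alpha}} \le C\|D[\nabla_\B Q](w)[\eta]\|_{C^{0,\alpha}} \le C\|\eta\|_{C^{0,\alpha}}$, again using that the elliptic operators in question map $C^{0,\alpha}$ boundedly to $C^{0,\alpha}$ when applied to a fixed direction, or more precisely Schauder estimates for the inverse. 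The main obstacle I anticipate is the bookkeeping needed to run the implicit function theorem \emph{simultaneously} in the $C^{2,\alpha}$ and $W^{1,2}$ scales while keeping the map constrained to $\B$: one must check that the projections $\pi_K$, $\pi_{K^\perp}$ and $\pi_{T_u\B}$ interact well with both topologies (Lemma~\ref{lem:bm} is designed exactly for this), that $K \subset C^\infty$ so that the finite-dimensional directions cause no regularity loss, and that the neighborhood $U$ and constant $C$ can be chosen uniformly --- the uniform modulus-of-continuity statements in Lemma~\ref{lem:bm} are what make this go through.
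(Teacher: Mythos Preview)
Your proposal is correct and follows essentially the same route as the paper's proof. The only cosmetic difference is packaging: the paper bundles the two conditions into a single map $\mathcal{N}(w)=\pi_K(w-v)+\pi_{K^\perp}\nabla_\B Q(w)$ on $C^{2,\alpha}\cap\B$ and applies the \emph{inverse} function theorem, which has the minor advantage that the constraint $\mathcal{L}\subset\B$ comes for free from the domain of $\mathcal{N}$, whereas your implicit-function-theorem framing requires the extra bookkeeping you flag. The verification of properties \eqref{item: LS property 1}--\eqref{item: LS F estimate}, including the use of finite-dimensionality of $K$ to pass between norms and the continuity statements from Lemma~\ref{lem:bm}, matches the paper's argument.
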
 

Lyapunov-Schmidt reductions have been already performed for the Yamabe functional in a variety of contexts (see, e.g. \cite[Proposition 7]{CaChRu}). However, since our audience may be less familiar with the construction (which is a consequence of the 
inverse function theorem), we include the proof in Appendix \ref{s:proofofLS}.

Associated to the Lyapunov-Schmidt reduction is the notion of integrability (see for instance \cite{AdSi,CaChRu}), which roughly states that all the elements in the kernel correspond to one-parameter families of critical points.

\begin{definition}[Integrability]\label{def: int} {\rm
	A function $v \in \mathcal{CSC}_1$ is said to be {\it integrable} if for all $\vphi \in \ker \nabla^2_\B Q(v)$ there exists a one-parameter family of functions $(v_t)_{t\in (-\delta,\delta)}$, with $v_0 = v$, $\frac{\partial}{\partial t}\big|_{t=0}v_t = \vphi $ and $v_t \in \mathcal{CSC}_1$ for all $t$ sufficiently small.
	}
\end{definition}

\begin{lemma}[$Q$ in the integrable setting]\label{lem: Integrable}
	Let $(M, g)$ be a closed Riemannian manifold and let $v \in \mathcal{M}$. Then $v$ is integrable if and only if $q$ is constant in a neighborhood of $0 \in K$. In particular, if $v \in \mathcal{M}_1$ is an integrable minimizer, then 
	\begin{equation}\label{eqn: integrable csc and L}
	 \mathcal{M}_1\cap \B(v,\delta) =  \mathcal{L}\,,
	\end{equation}
	where $\mathcal L$ is as in Lemma \ref{lem: LS reduction}, Condition \ref{item: LS property 1}.
\end{lemma}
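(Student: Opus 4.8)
The plan is to prove Lemma~\ref{lem: Integrable} by tying together the integrability condition in Definition~\ref{def: int} with the finite-dimensional reduction provided by Lemma~\ref{lem: LS reduction}. The two claims to establish are: (i) $v$ integrable $\iff$ $q$ constant near $0\in K$; and (ii) under integrability, $\mathcal M_1\cap \B(v,\delta)=\mathcal L$ for $\delta$ small. I expect step (ii) to follow fairly formally once (i) is known, and the heart of the matter to be the two implications in (i).

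For the direction ``$q$ constant $\Rightarrow$ $v$ integrable'': fix $\vphi\in K=\ker\nabla^2_\B Q(v)$ and consider the curve $v_t:=v+t\vphi+F(t\vphi)$ for $t$ small, which lies in $\B$ by \eqref{eqn: LS proj sat volume constraint} and satisfies $v_0=v$ and $\frac{\partial}{\partial t}\big|_{t=0}v_t=\vphi+\nabla F(0)[\vphi]=\vphi$ since $\nabla F(0)=0$. By \eqref{eqn: finite dim}, $\nabla_\B Q(v_t)=\nabla q(t\vphi)$, which vanishes because $q$ is constant; hence each $v_t$ is a constrained critical point, i.e.\ solves \eqref{main} (with $\lambda=Q(v_t)$), so $v_t\in\mathcal{CSC}_1$. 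That is exactly integrability. For the converse, suppose $v$ is integrable. Given $\vphi\in K$, let $(v_t)$ be the one-parameter family of critical points from Definition~\ref{def: int}. For $t$ small, $v_t\in\mathcal{CSC}_1\cap\B(v,\delta)$, so by part~\eqref{item LS 2 W12 nbhd} of Lemma~\ref{lem: LS reduction}, $v_t=v+\pi_K(v_t-v)+F(\pi_K(v_t-v))$, i.e.\ $v_t$ lies on $\mathcal L$ with parameter $\psi(t):=\pi_K(v_t-v)\in U$. Then $\nabla q(\psi(t))=\nabla_\B Q(v_t)=0$ for all small $t$. Differentiating the identity $v_t = v+\psi(t)+F(\psi(t))$ at $t=0$ and projecting onto $K$ gives $\psi'(0)=\pi_K\big(\frac{\partial}{\partial t}\big|_0 v_t\big)=\pi_K\vphi=\vphi$. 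Thus the analytic function $t\mapsto q(\psi(t))$ has vanishing gradient along a curve whose velocity at $0$ is an arbitrary element of $K$; since this holds for every $\vphi\in K$, the gradient $\nabla q$ vanishes on a neighborhood of $0$ (it vanishes in every direction near $0$, using that the $\psi(t)$ sweep out a full-dimensional neighborhood, or more carefully, one argues by analyticity of $q$ together with the fact that $v$ minimizes, so $0$ is a local min of $q$ and $\nabla q\equiv 0$ in directions tangent to the critical manifold). Hence $q$ is locally constant near $0$.

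For claim (ii): assume $v\in\mathcal M_1$ is integrable, so $q\equiv q(0)=Q(v)=Y(M,[g])$ near $0$. The inclusion $\mathcal L\subseteq \mathcal M_1\cap\B(v,\delta)$ holds because every point $v+\vphi+F(\vphi)$ of $\mathcal L$ lies in $\B$ and has $Q$-value $q(\vphi)=Y(M,[g])$, hence is a minimizer; shrinking the neighborhood $U$ if necessary keeps $\mathcal L\subset\B(v,\delta)$. Conversely, if $u\in\mathcal M_1\cap\B(v,\delta)$, then $u$ is in particular a constrained critical point, $u\in\mathcal{CSC}_1\cap\B(v,\delta)$, so \eqref{eqn: cp in L 1} from Lemma~\ref{lem: LS reduction} gives $u=v+\pi_K(u-v)+F(\pi_K(u-v))\in\mathcal L$. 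Combining the inclusions yields \eqref{eqn: integrable csc and L}.

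The main obstacle is the implication ``$v$ integrable $\Rightarrow$ $q$ locally constant''. One must be careful that a single curve through $0$ with velocity $\vphi$ only controls the directional derivative of $q$ at $0$ along $\psi(t)$, not on a whole neighborhood. The clean way around this is to use that $v$ is a \emph{minimizer}, so $0$ is a local minimum of the analytic function $q$ on $U$; then the set $\{q=q(0)\}$ near $0$ is exactly the critical set of $q$, and the integrability hypothesis shows this critical set contains a curve tangent to each direction in $K$, which combined with $\nabla q(0)=0$, $\nabla^2 q(0)=0$ (the latter because $K=\ker\nabla^2_\B Q(v)$) and the analytic structure forces $q\equiv q(0)$ on a neighborhood. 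Alternatively, one invokes the \L ojasiewicz gradient inequality for the analytic function $q$: if $q$ were nonconstant, it would satisfy $|\nabla q(\psi)|\geq c|q(\psi)-q(0)|^{1-\theta}$, which is incompatible with having $\nabla q$ vanish along a nontrivial curve emanating from $0$ on which $q=q(0)$ unless $q\equiv q(0)$ there. I would phrase the argument using whichever of these the paper's conventions make cleanest, most likely the minimality-plus-analyticity route since $v\in\mathcal M$ is assumed.
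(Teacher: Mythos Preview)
Your treatment of the direction ``$q$ constant $\Rightarrow$ integrable'' and of \eqref{eqn: integrable csc and L} is correct and matches the paper. The gap, which you rightly flag, is in ``integrable $\Rightarrow$ $q$ constant'', and neither of your proposed fixes closes it as written. The \L ojasiewicz route fails: the gradient inequality $|\na q(\psi)|\ge c\,|q(\psi)-q(0)|^{1-\theta}$ is trivially satisfied along any curve on which $q\equiv q(0)$ (both sides vanish), so no contradiction follows. For your first route, the claim that the curves $\psi(t)$ ``sweep out a full-dimensional neighborhood'' is unjustified and generally false, and the appeal to minimality is unnecessary: along any curve of critical points one has $\frac{d}{dt}q(\psi(t))=\langle\na q(\psi(t)),\psi'(t)\rangle=0$, so $q(\psi(t))\equiv q(0)$ whether or not $0$ is a minimum.

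What is missing is the explicit leading-term argument the paper carries out. Assume $q$ is nonconstant and write $q(\vphi)=q(0)+q_{k_0}(\vphi)+q_R(\vphi)$, where $q_{k_0}$ is the first nonvanishing homogeneous term (degree $k_0\ge 3$ since $\na q(0)=0$ and $\na^2 q(0)=0$). Choose $\vphi\in K$ with $\na q_{k_0}(\vphi)\neq 0$, let $(v_s)$ be the family of critical points with $\partial_s|_{s=0}(v_s-v)=\vphi$, and set $\vphi_s:=\pi_K(v_s-v)$, so $\vphi_s/s\to\vphi$. From $\na q(\vphi_s)=0$ and homogeneity, $0=|\vphi_s|^{k_0-1}\na q_{k_0}\!\big(\vphi_s/|\vphi_s|\big)+o(|\vphi_s|^{k_0-1})$; dividing and letting $s\to 0$ yields $\na q_{k_0}(\vphi)=0$, a contradiction. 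An equally valid variant, closer in spirit to your Fix~1, is to use $q(\psi(t))\equiv q(0)$ directly: expanding gives $0=t^{k_0}q_{k_0}(\vphi)+o(t^{k_0})$ for every $\vphi$, hence $q_{k_0}\equiv 0$, again a contradiction. Either way, the analyticity enters through this concrete Taylor expansion; an abstract invocation of ``analytic structure'' does not by itself finish the argument.
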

\begin{proof}
	Suppose that $v$ is integrable. We claim that $q$ is constant in a neighborhood of $0 \in K$. We abuse notation and let $\vphi$ refer to a point in $K \cong \mathbb R^\ell$. Suppose to the contrary that $q$ is non-constant. Considering a Taylor expansion of this analytic function,  we express $q$ as 
	\[
	q(\vphi) =q(0) + q_{k_0} (\vphi ) + q_R(\vphi),
	\]
	where $q_{k_0}$ is a degree $k_0$ homogeneous polynomial, the first non-vanishing term in the Taylor expansion, and $q_R$ is the sum of homogenous polynomials of degree $k >k_0.$ Since $q_{k_0}$ is non-constant, we may find some $\vphi \in K$ such that 
	\begin{equation}\label{eqn: nonzero deriv}
		\na q_{k_0}(\vphi) \neq 0 .
	\end{equation}
	For this choice of $\vphi$, we let $u_s =v + \psi_s \in \B$ be the one parameter family of critical points generated by $\vphi$, whose existence is guaranteed by the integrability of $v$, satisfying $\psi_s = 0$, $\frac{d}{ds}|_{s=0}\psi_s =\vphi$, and 
	\begin{equation}\label{eqn: crit pt integrable pf}
	\nabla_\B Q(v+ \psi_s) =0.
	\end{equation}
By \eqref{eqn: cp in L 1}, all critical points of $Q$ in a $W^{1,2}$ neighborhood of $v$ are contained in $\mathcal{L}$, and so for each $s$ we may express $\psi_s$ as 
	\begin{equation*}
		\psi_s = \vphi_s + F(\vphi_s)
	\end{equation*}
	where $\vphi_s \in K$ and $\frac{\vphi_s }{s} \to \vphi$ as $s\to 0.$ (This latter fact follows because $\psi_s/s \to \vphi$ as $s\to 0$ and $\nabla F(0)=0$ by Lemma~\ref{lem: LS reduction}). Note that by \eqref{eqn: finite dim} of Lemma~\ref{lem: LS reduction} and \eqref{eqn: crit pt integrable pf} we have $\na q(\vphi_s) = 0.$ So, we have 
	\begin{align*}
		0 & = \nabla_\B Q(v+ \psi_s ) = \na q(\vphi_s) \\
		& = \na q_{k_0} (\vphi_s) + \na q_R(\vphi_s)\\
		& = |\vphi_s|^{k_0-1} \na q_{k_0} \left(\frac{\vphi_s}{|\vphi_s|}\right) + o(|\vphi_s|^{k_0-1}).
	\end{align*}
	Dividing through by $|\vphi_s|^{k_0-1}$ and letting $s$ tend to zero, we reach a contradiction to \eqref{eqn: nonzero deriv} and conclude that $q$ is constant. 
	
	Now we establish the opposite implication. Suppose that $q\equiv q(0)$ in a neighborhood of $0$, and thus $\na q \equiv 0$ in a neighborhood of $0.$ Choose any $\vphi \in K$. We claim that $\vphi$ generates a one parameter family of critical points, which will show that $v$ is integrable. Indeed, set 
	\begin{equation*}
		\psi_s = s\vphi + F(s\vphi).
	\end{equation*}
	We can see directly from Lemma~\ref{lem: LS reduction} that 
	\begin{equation*}
		\nabla_{\B} Q(v+\psi_s) = \na q(s\vphi) = 0,
	\end{equation*}
	and so $\psi_s$ is a family of critical points and $v$ is integrable.

	Finally, we show \eqref{eqn: integrable csc and L}. One containment in \eqref{eqn: integrable csc and L} is precisely \eqref{eqn: cp in L 1}, as $\mathcal{M}_1 \subset \mathcal{CSC}_1$. The opposite containment holds in the case that $v$ is integrable, as we have just shown that $q$ is constant on all of $\mathcal{L}$ , and thus all these points are minimizers as well. 
\end{proof}

\section{Local Quantitative Stability of Minimizers}\label{s:localstability}
In this section, we establish the local version of Theorem~\ref{thm: minimizers}, that is Proposition \ref{lem: Fuglede} below. For this we need a localized measure of how far $u$ is from being a minimizer {\it that is close to some given minimizer $v$}.

Given $\delta >0$ and $v \in \mathcal{M}_1$, 
we let 
\[
d_\delta(u , \mathcal{M}_1) = \frac{\inf\left\{ \| u -\tilde{v}\|_{W^{1,2}(M)} \mid \tilde{v} \in \mathcal{M}_1\cap \B(v,\delta)\right\}}{\|u\|_{W^{1,2}(M)}}.
\]

\begin{proposition}[Local Stability Estimate]\label{lem: Fuglede}
	Let $(M,g)$ be a closed Riemannian manifold, and let $v \in \mathcal{M}_1$. Then there exist constants $c, \g$ and $\delta$ depending on $v$ such that 
	\begin{equation}\label{e: Fuglede}
		Q(u) - Y(M) \geq c \,d_\delta(u, \mathcal{M}_1)^{2+\g}\qquad  \text{for all } u \in \B(v,\delta).
			\end{equation}
	If  $v$ is integrable or non-degenerate, then we may take $\g=0$.  
\end{proposition}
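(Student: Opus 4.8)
The plan is to prove the Local Stability Estimate \eqref{e: Fuglede} by decomposing a competitor $u \in \B(v,\delta)$ according to the Lyapunov--Schmidt reduction of Lemma~\ref{lem: LS reduction} and then combining a coercivity estimate (on the complement of the kernel) with a \L ojasiewicz inequality (on the kernel itself). The first step is to reduce to competitors $u \in \B$, since both sides of \eqref{e: Fuglede} are scale-invariant in $u$ up to harmless factors of $\|u\|_{W^{1,2}}$; the normalization in the denominator of $d_\delta$ is precisely what absorbs the mismatched homogeneities when $\g>0$. After this reduction, write $\vphi = \pi_K(u-v) \in U$ (legitimate for $\delta$ small by item~\eqref{item LS 2 W12 nbhd} of Lemma~\ref{lem: LS reduction}) and let $w = v + \vphi + F(\vphi) \in \mathcal L \subset \B$ be the corresponding point on the finite-dimensional manifold. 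The idea is the two-step lower bound
\[
Q(u) - Y(M) = \big(Q(u) - Q(w)\big) + \big(q(\vphi) - q(0)\big),
\]
where $q$ is the real-analytic function from Lemma~\ref{lem: LS reduction}, and to control each term separately.

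For the first term, the key point is that $w$ is, by construction, a critical point of $Q$ restricted to the slice $v + \vphi + K^\perp$, and the second variation $\nabla^2_\B Q$ is \emph{coercive} on $K^\perp$ (it is elliptic with trivial kernel there by definition of $K$). Combined with the uniform continuity of $w \mapsto \nabla^2_\B Q(w)$ from Lemma~\ref{lem:bm}(2) and a Taylor expansion, this gives a quantitative convexity estimate: for $\delta$ small,
\[
Q(u) - Q(w) \geq c\, \| u - w\|_{W^{1,2}(M)}^2
\]
since $u - w \in K^\perp$ (indeed $\pi_K(u-w) = \pi_K(u-v) - \vphi = 0$ by definition of $\vphi$, modulo the fact that $F(\vphi) \in K^\perp$). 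For the second term, apply the distance-\L ojasiewicz inequality (Lemma~\ref{lem: lojdistance}, to be invoked) to the real-analytic function $q$ near its critical point $0$: there exist $\theta \in (0,1/2]$ and $c>0$ with
\[
q(\vphi) - q(0) \geq c\, \operatorname{dist}\big(\vphi, \{q = q(0)\} \cap \{\na q = 0\}\big)^{1/\theta},
\]
using that $q \geq q(0)$ near $0$ because $v$ is a minimizer; set $\g = 1/\theta - 2 \geq 0$. Then relate the right-hand side to $d_\delta(u,\mathcal{M}_1)$: the nearest point $v + \vphi^* + F(\vphi^*)$ with $q(\vphi^*) = q(0)$ is a minimizer in $\mathcal{M}_1$ (as $q(0) = Y$), it lies in $\B(v,\delta')$ for a controlled $\delta'$, and $\|u - (v + \vphi^* + F(\vphi^*))\|_{W^{1,2}} \lesssim \|u - w\|_{W^{1,2}} + |\vphi - \vphi^*|$ using the Lipschitz bound \eqref{eqn: estimates for DF} on $F$. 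Combining the two bounds, using $2 \leq 2 + \g = 1/\theta$ and $\|u - w\|, |\vphi - \vphi^*| \leq C\delta$ to pass from the quadratic term to the $(2+\g)$-power, yields \eqref{e: Fuglede}.

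Finally, for the integrable or non-degenerate cases one takes $\g = 0$: if $v$ is non-degenerate then $K = \{0\}$, $q$ is trivial, and only the coercivity estimate remains, giving the sharp quadratic bound directly. If $v$ is integrable, then by Lemma~\ref{lem: Integrable} $q$ is constant near $0$, so $\mathcal{M}_1 \cap \B(v,\delta) = \mathcal L$, the second term vanishes identically, $\vphi^* $ may be taken to be $\vphi$ itself, and again only the quadratic coercivity estimate on $K^\perp$ is needed. The main obstacle I anticipate is the bookkeeping around the normalization and the radii: one must check that $\pi_K(u-v) \in U$ uniformly, that the nearest minimizer on $\mathcal L$ stays within the neighborhood $\B(v,\delta)$ where the reduction and the \L ojasiewicz inequality are valid (so the infimum defining $d_\delta$ is genuinely achieved by a point we control), and that the passage from a clean quadratic lower bound $Q(u)-Q(w)\gtrsim \|u-w\|^2$ plus a \L ojasiewicz term $\gtrsim |\vphi-\vphi^*|^{2+\g}$ to a single $d_\delta(u,\mathcal{M}_1)^{2+\g}$ bound is done correctly — this requires the $W^{1,2}$-normalization in the denominator of $d_\delta$ and the smallness of $\delta$ to reconcile the exponents $2$ and $2+\g$, exactly as flagged in the Remark after Corollary~\ref{cor: conformal CONQUEST}.
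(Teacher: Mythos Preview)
Your proposal is correct and follows essentially the same approach as the paper: decompose $u$ via the Lyapunov--Schmidt projection $w = v + \vphi + F(\vphi)$, split $Q(u)-Y$ into $(Q(u)-Q(w)) + (q(\vphi)-q(0))$, control the first term by coercivity of $\nabla^2_\B Q(v)$ on $K^\perp$ (via Taylor expansion and the continuity in Lemma~\ref{lem:bm}), and control the second by the \L ojasiewicz distance inequality applied to the analytic function $q$, with the non-degenerate and integrable cases handled exactly as you describe. Your formulation of the \L ojasiewicz step in terms of the level set $\{q=q(0)\}$ (rather than merely the critical set of $q$) is in fact slightly cleaner for linking back to $\mathcal{M}_1$, since points with $q(\hat\vphi)=q(0)=Y$ are automatically minimizers; otherwise the argument matches the paper's proof line by line.
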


We recall that $v$ is called {\it non-degenerate} if the kernel $K = \ker \nabla^2_{\B} Q(v)[-,-]\subset T_v\B$ of the second variation (as in the discussion preceding Lemma~\ref{lem: LS reduction}) is trivial. We also recall that the definition of integrability was given in Definition~\ref{def: int}.

\begin{proof}[Proof of Proposition \ref{lem: Fuglede}]
Given $v \in \M_1$, let $F$ be the Lyapunov-Schmidt reduction adapted to $v$ as in Lemma \ref{lem: LS reduction}, and let $K$ be the kernel of $\nabla_{\B}^2Q(v)$ (see the discussion before Lemma \ref{lem: LS reduction}). 

By Lemma~\ref{lem: LS reduction}\eqref{item LS 2 W12 nbhd}, for any $u\in \B(v,\delta)$, we may define the Lyanpunov-Schmidt ``projection'' $u_{\mathcal{L}}$ of $u$ by
\begin{equation}\label{eqn: LS Projection}
	u_\mathcal{L}  = v + \pi_K (u-v) + F(\pi_K (u-v)).
\end{equation}
	Note that, thanks to  Lemma~\ref{lem: LS reduction}~\eqref{item LS 2 W12 nbhd} and \eqref{item: LS F estimate},  for any $\e>0$, we may take $\delta>0$ small enough in Lemma~\ref{lem: LS reduction} such that 
\begin{align}\label{eqn: uL-v small}
	\| u_\mathcal{L} - v\|_{W^{1,2}(M)} & \leq \e, \\
	\label{eqn: u-uL small}
	\| u_{\mathcal{L}} - u\|_{W^{1,2}(M)} & \leq \e. 
\end{align}

We can write \begin{equation}\label{e:qsplits}
Q(u) -Y = \underbrace{Q(u)-Q(u_{\mathcal L})}_{I} + \underbrace{Q(u_{\mathcal L})-Y}_{II}
\end{equation}
and estimate these two terms separately.

\noindent {\bf Term $I$:} It will be useful for us to write $u = u_{\mathcal L} + u^{\perp}$. Using the notation introduction before Lemma \ref{lem: LS reduction} we note that $u^\perp \in K^\perp$. 
To estimate $I$, we use Taylor's theorem and see that 
\begin{equation}\label{e:first taylors theorem} Q(u)-Q(u_{\mathcal L}) = \nabla_\B Q(u_{\mathcal L})[u^\perp] + \frac{1}{2}\nabla^2_{\B}Q(\zeta)[u^\perp, u^\perp], 
\end{equation} 
for some $\zeta$  on a geodesic in $\B$ between $u$ and $u_{\mathcal L}$. Observe that $\nabla_\B Q(u_{\mathcal L})[u^\perp]=0$ by Lemma \ref{lem: LS reduction} and the fact that $u^\perp \in K^\perp$. 
Furthermore, using the continuity of $\nabla^2_{\B} Q(-)$ established in Lemma \ref{lem:bm} and \eqref{eqn: u-uL small}, we can write
 \begin{equation}\label{e:movethesecondderivative}
 Q(u)-Q(u_{\mathcal L}) = \frac{1}{2}\nabla^2_{\B} Q(v)[u^\perp, u^\perp] + o(1)\|u^{\perp}\|_{W^{1,2}}^2,
 \end{equation}
  where $o(1)$ represents a term that goes to zero as $\|u-v\|_{W^{1,2}} \rightarrow 0$. Let $\lambda_1 > 0$ be the smallest non-zero eigenvalue of $\nabla^2_\B Q(v)$. It then follows that, picking $\delta > 0$ in the statement small enough, \begin{equation}\label{e:finalcoerciveestimate}Q(u)-Q(u_{\mathcal L}) \geq  \frac{1}{2}\lambda_1 \|u^\perp\|_{W^{1,2}}^2 +o(1)\|u^{\perp}\|_{W^{1,2}}^2 \geq \frac{1}{4}\lambda_1\|u^{\perp}\|_{W^{1,2}}^2.\end{equation} 

\medskip

\noindent {\bf The Term $II$:} It will be useful to separate out three cases for estimating term $II$.

\smallskip

\noindent {\bf $v$ is non-degenerate.} This is the easiest case, since then $u_{\mathcal L}=v$ and then \eqref{e:finalcoerciveestimate} concludes the proof.

\smallskip

\noindent {\bf $v$ is integrable.} By Lemma \ref{lem: Integrable} we have that $Q(u_{\mathcal L}) = q(\pi_K(u-v)) = q(0) = Q(v)$. So the proposition follows from \eqref{e:finalcoerciveestimate}. 

\smallskip

\noindent {\bf $v$ is non-integrable.} Let $\vphi = \pi_K(u-v)$ and recall that $Q(u_{\mathcal L}) = q(\vphi)$. We know that $\vphi \mapsto q(\vphi)$ is an analytic function  $\mathbb R^\ell\rightarrow \mathbb R$ where $\ell = \dim K$. Thus we can apply the \L ojasiewicz inequality \cite{Loj}: 

\begin{lemma}[\L ojasiewicz ``distance" inequality]\label{lem: lojdistance}
Let $q: \mathbb R^\ell \rightarrow \mathbb R$ be a real analytic function and assume that $\nabla q(0) = 0$. Then there exist  $\tilde{\delta} > 0, c > 0$ and $\g > 0$ (all of which depend on $q$ and on the critical point $0$)
such that for all $\vphi \in B(0,\tilde{\delta})$,
 \begin{equation}\label{e: lojdistance}
|q(\vphi) - q(0)| \geq c \inf\left\{ |\vphi - \bar \vphi | \ : \ \bar \vphi \in B(0,\tilde{\delta}), \, \na q(\bar \vphi ) =0\right\}^{2+\g}.
\end{equation}
\end{lemma}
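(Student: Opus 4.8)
The plan is to derive this ``distance'' \L ojasiewicz inequality from the classical \emph{gradient} \L ojasiewicz inequality \cite{Loj} together with a gradient-flow length estimate, following the by-now standard scheme of Simon. Before starting, a remark on the hypotheses: in all of our applications $q$ is the restriction of the Yamabe energy to the Lyapunov--Schmidt kernel at a \emph{minimizer} $v$, so $q(\varphi)=Q(v+\varphi+F(\varphi))\ge Y=q(0)$ on $B(0,\tilde\delta)$; that is, $0$ is a local minimum of $q$. We will use this, and it genuinely cannot be dropped (for $q(x,y)=x^2-y^2$ the origin is the only nearby critical point, yet $q$ vanishes identically on $\{x=y\}$, so the asserted inequality fails under $\nabla q(0)=0$ alone). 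Subtracting a constant, we assume $q(0)=0$, so $q\ge 0$ on a ball about $0$.

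\textbf{Step 1 (gradient \L ojasiewicz inequality).} Since $q$ is real analytic with $\nabla q(0)=0$, there are $\theta\in(0,1)$, $C_1>0$ and $r_0>0$ such that $|\nabla q(\varphi)|\ge C_1\,q(\varphi)^{\theta}$ for $|\varphi|<r_0$. \textbf{Step 2 (trajectory length and convergence).} For $\varphi$ small, let $x(\cdot)$ solve the gradient flow $\dot x=-\nabla q(x)$, $x(0)=\varphi$. On any time interval where $x(t)\in B(0,r_0)$ and $q(x(t))>0$ one computes $\tfrac{d}{dt}q(x(t))^{1-\theta}=-(1-\theta)q(x(t))^{-\theta}|\nabla q(x(t))|^2\le -(1-\theta)C_1\,|\dot x(t)|$, so integrating bounds the trajectory length by $\big((1-\theta)C_1\big)^{-1}q(\varphi)^{1-\theta}$. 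First fixing $r_0$ and then choosing $\tilde\delta>0$ small, the trajectory starting in $B(0,\tilde\delta)$ never leaves $B(0,r_0)$ (indeed stays in $B(0,\tilde\delta)$ after further shrinking), hence has finite length and converges to some $\varphi_\infty\in B(0,\tilde\delta)$; since $q(x(t))\searrow q(\varphi_\infty)\ge 0$, the gradient inequality forces $\nabla q(\varphi_\infty)=0$. Moreover $|\varphi-\varphi_\infty|\le \big((1-\theta)C_1\big)^{-1}\big(q(\varphi)-q(\varphi_\infty)\big)^{1-\theta}$.

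\textbf{Step 3 (conclusion).} Because $0$ is a local minimum of $q$ and $\varphi_\infty$ lies in the ball on which $q\ge q(0)$, we have $q(\varphi_\infty)\ge q(0)$, hence $q(\varphi)-q(\varphi_\infty)\le q(\varphi)-q(0)$. Since $\varphi_\infty$ is a critical point of $q$ in $B(0,\tilde\delta)$,
$$\inf\{|\varphi-\bar\varphi| : \bar\varphi\in B(0,\tilde\delta),\ \nabla q(\bar\varphi)=0\}\ \le\ |\varphi-\varphi_\infty|\ \le\ C_2\,\big(q(\varphi)-q(0)\big)^{1-\theta},$$
with $C_2=\big((1-\theta)C_1\big)^{-1}$. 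Raising both sides to the power $p:=1/(1-\theta)$ gives $|q(\varphi)-q(0)|\ge c\,\mathrm{dist}(\varphi,\{\nabla q=0\}\cap B(0,\tilde\delta))^{\,p}$. If $p\ge 2$ we take $\gamma=p-2\ge 0$ (and bump it slightly if $\gamma=0$); if $p<2$, then since the distance is $\le\tilde\delta<1$ we have $\mathrm{dist}^{\,p}\ge\mathrm{dist}^{\,2+\gamma}$ for any $\gamma>0$, so the inequality holds with exponent $2+\gamma$ for any $\gamma>0$. Either way the lemma follows.

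The main obstacle is the bookkeeping in Step~2: one must choose $\tilde\delta$ small in the correct order so that the gradient flow --- whose length is controlled only by $q(\varphi)^{1-\theta}$, a quantity not a priori comparable to $|\varphi|$ --- cannot escape the ball $B(0,r_0)$ on which the gradient inequality is available, which is what legitimizes both the convergence of $x(t)$ and the fact that its limit is a critical point lying inside $B(0,\tilde\delta)$. A secondary but essential point is the use of local minimality of $0$ in Step~3 to replace $q(\varphi_\infty)$ by $q(0)$.
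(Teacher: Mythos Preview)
The paper does not actually prove Lemma~\ref{lem: lojdistance}; it is stated with a citation to \cite{Loj} and then applied. So there is no ``paper's proof'' to compare against, and your contribution is to supply an argument. Your most valuable observation is that the lemma \emph{as stated} is false: your example $q(x,y)=x^2-y^2$ shows that $\nabla q(0)=0$ alone does not suffice, and that one needs the additional hypothesis that $0$ is a local minimum of $q$ (which does hold in the application, since $q(\varphi)=Q(v+\varphi+F(\varphi))\ge Y=q(0)$). This is a genuine correction to the paper's formulation, not merely pedantry.

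Your proof via the gradient \L ojasiewicz inequality and the Simon-type arclength estimate for the negative gradient flow is correct. The bookkeeping in Step~2 (ensuring the trajectory stays in the ball where the gradient inequality is valid, and that the limit $\varphi_\infty$ lands in $B(0,\tilde\delta)$) can indeed be carried out: one first fixes $r_0$, then uses continuity of $q$ at $0$ to choose $\tilde\delta$ so small that the arclength bound $C_2\,q(\varphi)^{1-\theta}$ is less than, say, $\tilde\delta$ itself. One small simplification: once you know $\varphi_\infty$ remains in the region where the gradient inequality holds, that inequality together with $\nabla q(\varphi_\infty)=0$ forces $q(\varphi_\infty)=q(0)$ directly, so the appeal to local minimality in Step~3 is redundant there (though it is still essential to ensure $q\ge 0$ along the flow so the $q^{1-\theta}$ computation in Step~2 is legitimate).

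There is a shorter route worth noting. The classical \emph{level-set} \L ojasiewicz inequality (also in \cite{Loj}) gives, for real analytic $f$ with $f(0)=0$, constants $c,\alpha,r>0$ such that $|f(x)|\ge c\,\mathrm{dist}(x,\{f=0\})^\alpha$ for $|x|<r$. Applying this to $f=q-q(0)$ and using local minimality to observe that $\{q=q(0)\}\cap B(0,r)\subset\{\nabla q=0\}\cap B(0,r)$ (every point on the minimal level is itself a local minimum, hence critical) yields the desired inequality in one stroke, with no gradient flow. Your approach has the advantage of making the exponent explicit in terms of the gradient-\L ojasiewicz exponent $\theta$, which is sometimes useful; the level-set approach is more elementary.
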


 Appealing to the definition of $q$ in Lemma~\ref{lem: LS reduction} and   the $\L$ojasiewicz inequality in  Lemma~\ref{lem: lojdistance}, we see that 
\begin{equation}\label{eqn: intermediate Loj} 
\begin{split}
	Q(u_{\mathcal{L}}) -Y& =
	q(\vphi) - q(0) \\
	& \geq c \inf\{ |\vphi - \hat{ \vphi} | \ : \  \hat{\vphi} \in K\cap B(0,\delta), \na q(\vphi ) =0\}^{2+\g}.
\end{split}	
\end{equation}
Notice further that 
\begin{align*}
 \inf\{ |\vphi - \bar \vphi | \ :&\ \bar \vphi \in K\cap B(0,\delta), \na q(\vphi ) =0\} \\
 &\geq c \inf \{ \|u_\mathcal{L} - \hat{v} \|_{W^{1,2}(M)} \ : \ \hat{v} \in \M_1 \cap \B(v,\delta)\} 
 \end{align*}
because for any $\hat{v} \in \M \cap \B(v,\delta)$, we may write $\hat v = v + \hat{\vphi} + F(\hat{\vphi})$ for some  $\hat{\vphi} \in K\cap B(0,\delta)$ with  $\na q(\vphi ) =0$, and 
\begin{align*}
\|u_\mathcal{L} - \hat{v}\|_{W^{1,2}(M)} &= \| \pi_K (u-v) +F(\pi_K(u-v)) -  \hat{\vphi} -F(\hat{\vphi})\|_{W^{1,2}(M)} \\
& \leq \| \pi_K (u-v) -\hat{\vphi}\|_{W^{1,2}(M)} + \| F(\pi_K(u-v))-F(\hat{\vphi})\|_{W^{1,2}(M)}\\
& \leq \| \pi_K (u-v) -\hat{\vphi}\|_{W^{1,2}(M)} + C \| \pi_K (u-v) -\hat{\vphi}\|_{C^{0,\alpha}(M)}\\
&\leq C\| \pi_K (u-v) -\hat{\vphi}\|_{W^{1,2}(M)},
\end{align*}
where in the penultimate inequality we have used Lemma~\ref{lem: LS reduction}\eqref{item: LS F estimate}.
Together with \eqref{eqn: intermediate Loj}, this implies that 
\begin{equation}\label{eqn: final Loj}
	Q(u_{\mathcal{L}}) -Y  \geq 	c \inf \{ \|u_\mathcal{L} - \hat{v} \|_{W^{1,2}(M)} \ : \ \hat{v} \in \M_1 \cap \B(v,\delta)\}^{2+\g}.
\end{equation}
Combining \eqref{eqn: final Loj} with \eqref{e:finalcoerciveestimate} yields the result in this third and final setting. 
\end{proof}

\section{Proofs of Theorems~\ref{thm: minimizers} and \ref{thm: superquadratic} and Corollary~\ref{cor: conformal CONQUEST}}

In this section we conclude the proofs of the main results, that is, Theorems \ref{thm: minimizers} and \ref{thm: superquadratic} and Corollary~\ref{cor: conformal CONQUEST}.  Theorem \ref{thm: minimizers} will be a consequence of the local quantitative stability in Proposition \ref{lem: Fuglede} and a compactness argument, while Theorem \ref{thm: superquadratic} will follow from an example of \cite{CaChRu}.

\subsection{Proof of Theorem \ref{thm: minimizers}}
In the proof of Theorem~\ref{thm: minimizers},  we will make use of the following compactness result for minimizing sequences, which is proven, for instance, in \cite[Theorem 4.1]{Lions}.
\begin{lemma}\label{lem: cpt}
	Let $(M,g)$ be a smooth Riemannian manifold of dimension $n\geq 3$ and let $(u_i)\subset \B$ be a sequence such that $Q(u_i) \to Y$.  Then, up to a subsequence, $u_i$ converges strongly in $W^{1,2}(M)$ to some $v \in \mathcal{M}_1$. 
\end{lemma}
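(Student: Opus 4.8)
The plan is to prove Lemma~\ref{lem: cpt} by a concentration--compactness argument, which we read under the hypothesis $Y:=Y(M,[g])<Y(S^n,[g_0])$; by the theorem of Aubin and Schoen recalled in the introduction this holds whenever $(M,g)$ is not conformally the round sphere, which is the setting in which the lemma is applied. First I would establish a uniform bound: since $u_i\in\B$ we have $\|u_i\|_{L^{2^*}(M)}=1$, hence $\|u_i\|_{L^2(M)}\le \vol_g(M)^{1/n}$ by H\"older, and then $\int_M c_n|\na u_i|^2\,d\vol_g\le Q(u_i)+\|R_g\|_{L^\infty}\|u_i\|_{L^2}^2\le C$, so $(u_i)$ is bounded in $W^{1,2}(M)$. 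Passing to a subsequence, $u_i\rightharpoonup v$ weakly in $W^{1,2}(M)$, $u_i\to v$ strongly in $L^2(M)$ by Rellich, and $u_i\to v$ a.e., so in particular $v\ge 0$.

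Next, set $w_i=u_i-v$, so $w_i\rightharpoonup 0$ in $W^{1,2}$ and $w_i\to 0$ in $L^2$. The Brezis--Lieb lemma gives $1=\|v\|_{L^{2^*}}^{2^*}+\|w_i\|_{L^{2^*}}^{2^*}+o(1)$; along a further subsequence put $a:=\|v\|_{L^{2^*}}^{2^*}$ and $b:=\lim_i\|w_i\|_{L^{2^*}}^{2^*}$, so $a+b=1$. Expanding $|\na u_i|^2=|\na v|^2+|\na w_i|^2+2\,\na v\cdot\na w_i$, using $\na w_i\rightharpoonup 0$ in $L^2$ together with $\int_M R_g u_i^2\to\int_M R_g v^2$ (as $R_g\in L^\infty$ and $u_i\to v$ in $L^2$), one gets $\mathcal E(u_i)=\mathcal E(v)+c_n\|\na w_i\|_{L^2}^2+o(1)$ with $\mathcal E$ as in Section~\ref{s: prelims}; since $u_i\in\B$ we have $Q(u_i)=\mathcal E(u_i)$, whence $\mathcal E(v)+\mu=Y$ where $\mu:=\lim_i c_n\|\na w_i\|_{L^2}^2\ge 0$. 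The two key inequalities are then: (i) $\mathcal E(v)=Q(v)\|v\|_{L^{2^*}}^2\ge Y\,a^{2/2^*}$ from the definition of the Yamabe constant and $v\ge 0$ (trivial if $v\equiv 0$); and (ii) $\mu\ge Y(S^n,[g_0])\,b^{2/2^*}$, obtained from Aubin's almost-optimal Sobolev inequality on the closed manifold $M$ --- for every $\e>0$ there is $C_\e$ with $Y(S^n,[g_0])\|w\|_{L^{2^*}}^2\le(1+\e)\int_M c_n|\na w|^2\,d\vol_g+C_\e\|w\|_{L^2}^2$ for all $w\in W^{1,2}(M)$ --- applied to $w_i$, using $\|w_i\|_{L^2}\to 0$ and letting $\e\to 0$.

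Combining (i) and (ii) with $\mathcal E(v)+\mu=Y$ gives $Y\ge Y\,a^{2/2^*}+Y(S^n)\,b^{2/2^*}$. A short case analysis on the sign of $Y$ (and on whether $v\equiv 0$), using $a+b=1$, the strict inequality $Y<Y(S^n)$, and the subadditivity $a^p+b^p\ge(a+b)^p=1$ for $p=2/2^*=1-\tfrac2n\in(0,1)$, forces $b=0$, hence $a=1$ and $\|v\|_{L^{2^*}}=1$; then $\mathcal E(v)=Q(v)\ge Y$ forces $\mu=Y-\mathcal E(v)\le 0$, so $\mu=0$, i.e. $\|\na w_i\|_{L^2}\to 0$. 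With $\|w_i\|_{L^2}\to 0$ this yields $u_i\to v$ strongly in $W^{1,2}(M)$, and since $\|v\|_{L^{2^*}}=1\ne 0$ the functional $Q$ is continuous along this convergence, so $Q(v)=\lim_i Q(u_i)=Y$ and $v\in\mathcal M_1$. The main obstacle is inequality (ii): quantifying that the mass $b$ escaping to the weak limit $0$ must cost at least $Y(S^n)$ units of Dirichlet energy. This is the heart of concentration--compactness and is where Aubin's $\e$-sharp Sobolev inequality (equivalently, a rescaling analysis near the concentration points producing Euclidean bubbles) is essential; the remaining steps are routine functional analysis and elementary estimates.
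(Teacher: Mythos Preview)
The paper does not provide its own proof of this lemma; it simply cites \cite[Theorem~4.1]{Lions}. Your concentration--compactness argument is correct and is precisely the standard approach underlying that citation: uniform $W^{1,2}$ bounds, weak compactness plus Rellich, the Brezis--Lieb splitting of the $L^{2^*}$ mass, the additive splitting of the Dirichlet energy, and then the comparison $Y\ge Y\,a^{2/2^*}+Y(S^n)\,b^{2/2^*}$ coming from the definition of $Y$ on one side and Aubin's $\e$-sharp Sobolev inequality on the other. The case analysis you sketch (handling $Y>0$, $Y=0$, $Y<0$ and ruling out $v\equiv 0$) goes through exactly as you indicate, using subadditivity of $t\mapsto t^{2/2^*}$ and the strict gap $Y<Y(S^n)$.

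One remark worth making explicit: the lemma as written in the paper does not state the hypothesis $Y(M,[g])<Y(S^n,[g_0])$, and indeed the conclusion fails on the round sphere (minimizing sequences can bubble). You correctly read the lemma under this hypothesis, which is available in the only place the lemma is invoked (the proof of Theorem~\ref{thm: minimizers}, which excludes the round sphere). It would be cleaner to add this assumption to the statement, but your proof is sound as written.
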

We now prove Theorem~\ref{thm: minimizers}.
\begin{proof}[Proof of Theorem~\ref{thm: minimizers}]
Since both sides of \eqref{e: minimizers} are zero-homogeneous in $u$ and because  $\inf\{ \| u -v\|_{W^{1,2}(M)} : v\in \M_1\} \geq d(u, \M)$,  we may work in $\B$ without loss of generality.

Given $v \in \M_1,$ let $\delta(v)$,  $\g(v)$, and $c(v)$ be the constants given in Proposition~\ref{lem: Fuglede}.
Since the set $\M_1 = \{  v \in \M : \| v\|_{L^{2^*}(M)}=1\}$ is compact in $W^{1,2}$ by Lemma~\ref{lem: cpt}, we may cover $\M_1$ by balls $\B(v, \delta(v)/2)$ and take a finite subcover $\{\B(v_i, \delta(v_i)/2)\}_{i\in \mathscr I}$. Then we define
\begin{align*}
\delta_0 &= \min_{i\in \mathscr I} \delta(v_i)/2 > 0,\\
\g_0 & = \max_{i\in \mathscr I}  \g(v_i)< \infty,\\
c_0 & = \min_{i\in \mathscr I}  c(v_i) >0.
\end{align*}

Let $u\in \B$ be such that $d(u, \mathcal{M}_1) < \delta_0/4$. 
There exists a $i \in \mathscr I$ such that $\|u -v_i\|_{W^{1,2}} < \delta_i/2$. If $\tilde{v}$ is the closest element of $\M_1$ to $u$ the triangle inequality implies that $\|\tilde{v}-v_i\|_{W^{1,2}} < \delta_i$. Thus we may apply Proposition~\ref{lem: Fuglede} to see that 
$$Q(u) - Y(M,[g]) \geq c(v_i)d_{\delta_i}(u, \mathcal{M}_1)^{2+\g_i} \geq c_0d(u, \mathcal M_1)^{2+ \g_0},
$$ which is the desired result. 

We are left with the case that $d(u, \mathcal M_1) > \delta_0/4$. Note since $\|u\|_{L^{2^*}}=1$ we have $d(u, \mathcal M) > \delta_0/16$ by the triangle inequality. 
Thanks to Lemma~\ref{lem: cpt} and the triangle inequality, 
 there exists a $\varepsilon > 0$ such that $$Q(u)-Y(M,[g]) < \varepsilon \Rightarrow d(u, \mathcal M) < \delta_0/16.$$ Thus, when $d(u, \mathcal M_1) > \delta_0/4$ we have that $Q(u) - Y(M,[g]) > \varepsilon $. Moreover, observe that by definition, $d(u, \mathcal M) \leq  1$. 

 Letting $c = \min\left\{c_0, \varepsilon\right\}$ we have proven the stability estimate \eqref{e: minimizers} for all $u \in \B$. 

\medskip

Finally, we show the generic statement. By work of Schoen \cite{SchoenNumber} (see also Anderson \cite{And}), generically (that is for an open and dense subset of the set of equivalence classes of $C^\infty$ metrics on a given compact manifold $M$ in the $C^2$ topology), there are finitely many critical points of $Q$ and each one is non-degenerate. Therefore the proof follows straightforwardly from the local version of Proposition \ref{lem: Fuglede} in the non-degenerate case, that is with $\gamma=0$.
\end{proof}

\subsection{Proof of Corollary~\ref{cor: conformal CONQUEST}}
Corollary~\ref{cor: conformal CONQUEST} is a direct consequence of Theorem~\ref{thm: minimizers}, up to showing that the distances defined there are conformally invariant.

\begin{proof}[Proof of Corollary~\ref{cor: conformal CONQUEST}]
Let $g = \phi^{4/(n-2)}\hat{g}.$ Note that 
$Q_{\hat{g}}(u) = Q(\phi u ) $
 and $\mathcal{M}_{g} = \{ v \in W^{1,2}(M) : Q_{g}(v) =Y \} = \{ v \in W^{1,2}(M) :  \phi v \in \mathcal{M}_{\hat{g}}\}.$ So, consider the metric $\tilde{g}$ given by $\tilde{g}= u^{4/(n-2)}g = (u\phi)^{4/(n-2)} \hat{g}$. We directly compute that
\[
\inf_{v \in \mathcal{M}_g} \int_M |u-v|^{2^*}d \vol_g = \inf_{v \in \mathcal{M}_g}\int_M |\phi(u-v)|^{2^*}d \vol_{\hat{g}} = \inf_{w \in \mathcal{M}_{\hat{g}}} \int_M |\phi u-w|^{2^*}d \vol_{\hat{g}},
\] 
which proves that $\|\cdot\|$ is conformally invariant. So, applying Theorem~\ref{thm: minimizers} and the Sobolev inequality on $(M,g)$, with $\tilde{g} = u^{\frac{4}{n-2}}g$, we have 
\begin{align*}
\mathcal{R}_{\tilde{g}} - Y = Q_{g}(u) -Y &\geq c \left(\frac{\inf_{v \in \mathcal{M}} \| u-v\|_{W^{1,2}(M)}}{\| u\|_{W^{1,2}(M)}}\right)^{2+\g} \\
& \geq c\left(\frac{\inf_{v \in \mathcal{M} }\| u-v\|_{L^{2^*}(M)}}{\| u\|_{L^{2^*}(M)}}\right)^{2+\g}= c \left(\frac{\inf_{g \in \mathcal{M}} \| \tilde{g}-g \|}{\vol_{\tilde{g}}(M)^{1/2^*}}\right)^{2+\g}. 
\end{align*}
To see the second inequality above, first note that in the case when $d(u, \mathcal M) \leq \delta_0$, the denominators are comparable. On the other hand, when $d(u, \mathcal M) > \delta_0$, we observe that the quantity ${\inf_{v \in \mathcal{M} }\| u-v\|_{L^{2^*}(M)}}/{\| u\|_{L^{2^*}(M)}}$ is bounded above by $1$ and so the inequality follows by choosing $c$ sufficiently small.
This establishes \eqref{e: minimizers conf invar}.
Next, suppose that $Y>0$ and $ g \in \mathcal{M}_1$, and recall that 
\[
\| g_u- g_v\|_* = \left( \int_{M}c_n |\na (u-v)|^2 + R_g (u-v)^2 \,d\vol_g\right)^{1/2}.
\]
Again as a consequence of Theorem~\ref{thm: minimizers} and the assumption that $\mathcal{R}_g - Y$, we obtain
\[
\mathcal{R}_g - Y \geq c\left(\frac{\inf_{\tilde{g} \in \mathcal{M}} \| g-\tilde{g}\|_*}{ \vol_g(M)^{1/2^*}}\right)^{2+\gamma}.
\] 
where we used that $g\in \mathcal{M}, Y > 0$ and, consequently, that $R_g > 0$ is constant. To conclude, it suffices to prove that $\|g_u - g_v\|_*$ does not depend on the choice of $g$.
Suppose that $g,\hat{g} \in \M_1$ with $g = \phi^{4/(n-2)} \hat{g}$. Then if $g_u = u^{4/(n-2)}g = (\phi u)^{4/(n-2)}\hat{g} = \hat{g}_{\phi u}$ and $g_v = v^{4/(n-2)}g = (\phi v)^{4/(n-2)}\hat{g} = \hat{g}_{\phi v}$, we have 
\begin{align*}
	\|g_u - g_v\|_* & = \int_M c_n | \na_g (u - v)|^2  + R_g (u - v)^2 \,d\vol_g\\
	& = \int_M (u-v)\left(-c_n \Delta_g(u-v) + R_g(u-v)\right)\, d\vol_g.
	\end{align*}
	Recall that $-c_n \Delta_g + R_g \equiv L_g$ is the conformal Laplacian and we have \begin{align*} L_g \psi =& \phi^{1-2^*}L_{\hat{g}}(\phi \psi)\\
	L_g \psi\, d\vol_g =& \phi L_{\hat{g}}(\phi \psi)\, d\vol_{\hat{g}}.
	\end{align*}
	Plugging this into the above we get that 
	\begin{align*}
	\|g_u - g_v\|_*& = \int_M (u-v)L_g(u-v)\, d\vol_g \\
	&= \int_M (\phi u - \phi v)L_{\hat{g}}(\phi u - \phi v)\, d\vol_{\hat{g}} = \|\hat{g}_{\phi u} - \hat{g}_{\phi v}\|_*.
	\end{align*}
This concludes the proof of \eqref{e: minimizers conf invar2}.	

\end{proof}

\subsection{Proof of Theorem \ref{thm: superquadratic}} Suppose that $u_0\in \mathcal M$ is nonintegrable.  Let $q: U \to \R$ where $U \subset \ker \na_{\B}^2Q(v) \cong \R^{\ell}$   be the function defined in Lemma \ref{lem: LS reduction}; since $q$ is analytic we can expand it in a power series 
$$
q(x)=q(0)+\sum_{j\geq p} q_j(x)
$$
where each $q_j$ is a degree $j$ homogeneous polynomial and $p$ is chosen so that $q_p(0)\neq 0$. As in \cite{CaChRu}, we will call $p$ \emph{the order of integrability of $u_0$}. Next we recall the notion of Adams-Simon positivity condition:

	\begin{definition}[$\ASp$ condition]\label{def:ASp}
		We say that $u_0$ satisfies the \emph{Adams–Simon positivity condition of order $p$}, $\ASp$ for short, if $p$ is the order of integrability of $u_0$ and $q_p|_{\mathbb S^{\ell -1}}$ attains a positive maximum for some $v\in \mathbb S^{\ell-1}$. 
	\end{definition}

The following Proposition is immediate from the definitions.

\begin{proposition}[$\ASp$ implies $\gamma>0$]\label{p:banana} Fix a closed Riemannainan manifold of dimension $n\geq 3$ and fix $p \geq 3.$ 
	Let $u_0$ be a non integrable critical point of the Yamabe energy and suppose that it satisfies the Adams--Simon positivity condition of order $p$. Then there exists a sequence of $u_i \in W^{1,2}(M)$ with $u_i \rightarrow u_0$ in $W^{1,2}$ but
	\begin{equation}\label{e:banana}
	\lim_{i\rightarrow \infty} \frac{Q(u_i) - Y(M,g)}{\|u_i - u_0\|^{p-\alpha}_{W^{1,2}}} = 0\,,\quad \forall \alpha>0.
	\end{equation}
\end{proposition}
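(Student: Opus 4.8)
The plan is to exhibit the sequence $u_i$ explicitly using the Lyapunov--Schmidt reduction and the structure of the Taylor expansion of $q$ near $0$. Since $u_0$ satisfies the $\ASp$ condition, there is a unit vector $v \in \mathbb{S}^{\ell-1} \subset K$ at which $q_p$ attains a positive maximum; in particular $\nabla q_p(v) = \mu v$ for some $\mu > 0$ (Lagrange multiplier), and more usefully $q_p(v) > 0$. First I would set $\varphi_i = t_i v \in K$ for a sequence $t_i \downarrow 0$ to be chosen, and define
\[
u_i = v_0 + \varphi_i + F(\varphi_i) \in \mathcal{L} \subset \B,
\]
where I write $v_0 = u_0$ and $F$ is the reduction map from Lemma~\ref{lem: LS reduction}. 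By Lemma~\ref{lem: LS reduction}\eqref{item: LS F estimate} and $\nabla F(0) = 0$ we have $\|F(\varphi_i)\|_{W^{1,2}} = o(t_i)$, so $\|u_i - u_0\|_{W^{1,2}} = t_i \|v\|_{W^{1,2}} + o(t_i)$; in particular $u_i \to u_0$ in $W^{1,2}$ and $\|u_i - u_0\|_{W^{1,2}}^{p-\alpha} \asymp t_i^{p-\alpha}$.

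Next I would compute the numerator. By the defining property of the reduction, $Q(u_i) = q(\varphi_i) = q(t_i v)$, and since $q(x) = q(0) + \sum_{j \geq p} q_j(x)$ with $q_j$ homogeneous of degree $j$ and $q(0) = Q(u_0) = Y(M,[g])$ (as $u_0 \in \mathcal{M}$), we get
\[
Q(u_i) - Y(M,[g]) = q_p(v)\, t_i^p + O(t_i^{p+1}) = q_p(v)\, t_i^p\,(1 + o(1)).
\]
Therefore
\[
\frac{Q(u_i) - Y(M,[g])}{\|u_i - u_0\|_{W^{1,2}}^{p-\alpha}} \asymp \frac{q_p(v)\, t_i^p (1+o(1))}{t_i^{p-\alpha}} = q_p(v)\, t_i^{\alpha}\,(1+o(1)) \longrightarrow 0
\]
as $i \to \infty$, for every fixed $\alpha > 0$. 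This is exactly \eqref{e:banana}. A minor point to record: since $u_0$ is a \emph{minimizer}, $Q(u_i) - Y(M,[g]) \geq 0$, which forces $q_p(v) \geq 0$ consistently with the $\ASp$ hypothesis; we do not actually need the maximum to be \emph{positive} for the limit to vanish, only that $q_p(v)$ is finite, but the positivity guarantees the chosen test functions genuinely lie near (and not exactly on) $\mathcal{M}$, matching the intended interpretation.

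The step requiring the most care is not really an obstacle so much as a bookkeeping matter: verifying that $\|u_i - u_0\|_{W^{1,2}} > 0$ for all large $i$ and that the $o(t_i)$ error from $F(\varphi_i)$ does not degrade the exponent. Both follow from $\|\nabla F(\varphi_i)[v]\|_{C^{2,\alpha}} \leq C\|v\|_{C^{0,\alpha}}$ together with $F(0) = 0$, $\nabla F(0) = 0$, which give $\|F(t_i v)\|_{W^{1,2}} \leq \|F(t_i v)\|_{C^{2,\alpha}} = o(t_i)$ via the mean value theorem along the segment $s \mapsto F(s v)$ and continuity of $\nabla F$ at $0$; since $v \neq 0$ in $K \subset W^{1,2}$, the leading term $t_i v$ is not cancelled. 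Everything else is the homogeneity of the $q_j$'s and analyticity of $q$, both supplied by Lemma~\ref{lem: LS reduction}.
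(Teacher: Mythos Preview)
Your proposal is correct and follows essentially the same approach as the paper: choose the direction $v\in\mathbb{S}^{\ell-1}$ maximizing $q_p$, set $u_t=u_0+tv+F(tv)$ via the Lyapunov--Schmidt reduction, and use $\|u_t-u_0\|_{W^{1,2}}\sim t$ together with $q(tv)-q(0)=q_p(v)t^p+O(t^{p+1})$ to conclude. Your write-up is, if anything, slightly more detailed than the paper's on the $o(t)$ control of $F(tv)$; the only cosmetic point is that the statement hypothesizes $u_0$ is a critical point while you (and the paper, implicitly) use $Q(u_0)=Y(M,[g])$, which requires $u_0\in\mathcal{M}$---this is how the proposition is applied in the proof of Theorem~\ref{thm: superquadratic}, so it is harmless.
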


\begin{proof} Let $v\in \mathbb S^{\ell -1}$ be the maximum of $q_p$ as in Definition \ref{def:ASp}. For $t\in [0,1]$ let $\hat u_t:= tv $, and consider the family of functions $(u_t)_t\subset W^{1,2}(\mathcal M)$ defined by
	$$
	u_t:=u_0+\hat u_t +F(\hat u_t)\,\quad t\in (0,1)\,,
	$$
	where $F$ is the function defined in Lemma \ref{lem: LS reduction}. By definition of $u_t$ and the properties of $F$, we have
	$$
	\|u_t-u_0\|_{W^{1,2}}\sim t\,,
	$$
	and moreover, by definition of $q$, we have
	$$
	Q(u_t)-Q(u_0)=q(\hat u_t)-q(0)=\sum_{j\geq p} q_j(\hat u_t)\,.
	$$
	Since $u_0$ satisfies $\ASp$, we conclude
	$$
	|Q(u_t)-Q(u_0)|\leq C t^p q_p(v)
	$$
for $t $ sufficiently small,
	which implies the desired conclusion.
\end{proof}

We are now ready to conclude the proof of Theorem \ref{thm: superquadratic}.

\begin{proof}[Proof of Theorem \ref{thm: superquadratic}] By Proposition \ref{p:banana}, it is enough to prove the existence of compact manifolds $(M,g)$, with $g$ a minimizer of the Yamabe energy, satisfying the $\ASp$ condition for $p\geq 3$. This has been done in \cite{CaChRu} (see also \cite{SchoenNumber}), and we recall them here for completeness.
	\begin{itemize}
		\item[(i)] Fix integers $n, m > 1$ and a closed $m$-dimensional Riemannian manifold $(M^m,g_M)$
		with constant scalar curvature $R_{g_M}=4(n+1)(m+n-1)$. Let $(\mathbb{P}^n,g_{FS})$ be the complex projective space equipped with the Fubini-Study metric, where the normalization of $g_{FS}$ is fixed so that $\mathbb S^{2n+1}(1) \to (\mathbb{P}^n,g_{FS})$ is a Riemannian submersion. Then the product metric  $M^m \times \mathbb P^n , g_M \bigoplus g_{FS} )$ is a degenerate critical point satisfying $\ASp$, $p=3$. 
		\item[(ii)]The product metric on $\mathbb S^1(1/\sqrt{n-2})\times \mathbb S^{n-1}(1)$ is a nonintegrable minimizer of the Yamabe energy satisfying $\ASp$ for some $p\geq 4$ (cf. \cite[Proposition 4]{CaChRu}).
	\end{itemize}
	In particular (ii) provides the desired example, being a minimizer.
\end{proof}

\appendix
\section*{Appendix}
\section{Proof of the Lyaponuv-Schmidt Reduction (Lemma \ref{lem: LS reduction})}\label{s:proofofLS}

	\begin{proof}[Proof of Lemma~\ref{lem: LS reduction}]
Fix $v \in \mathcal{M}_1$ and let $K, K^{\perp}$ be as in the discussion preceding the statement of Lemma~\ref{lem: LS reduction}. We proceed in several steps.
\\

{\it Step 1: Defining the map $F$.} We obtain the map $F$ using the inverse function theorem in the following way.
	Let us consider the map 

  {$\mathcal{N}: C^{2,\alpha}\cap \B \to C^{0,\alpha}(M)\cap T_v \B $}  		 defined by 
	\[
	\mathcal{N}(w) = \pi_K (w-v)+ \pi_{K^\perp}\na_\B Q(w).
	\]

This map satisfies $\mathcal{N}(v) = 0$ and, if $w_t$ is smooth curve in $C^{2,\alpha}\cap \B$ with $w_0 = v$ and $\partial _t|_{t=0}w_t =\zeta$, then
	\begin{equation}\label{eqn: derivative of N}
\begin{split}
	\na_\B \mathcal{N}(v)[\zeta] =\frac{d}{dt}|_{t=0} \, \mathcal{N}(w_t)  &= \pi_K\zeta +\pi_{K^\perp}\na_\B^2 Q(v)[\zeta,-] \\
	& = \pi_K\zeta +\na_\B^2 Q(v)[\zeta,-].
\end{split}	
	\end{equation}
Note that this is well defined for any $\zeta \in C^{2,\alpha}(M)\cap T_v\B$ 
 The last identity follows because $\na^2_\B Q(v)[\zeta] \in K^\perp$ for any $\zeta \in W^{1,2}(M)\cap T_v\B$; indeed, for any $\vphi \in K$ we have $0 = \langle \na^2_\B Q(v)[\vphi],\zeta\rangle_{L^2} = \langle \vphi, \na^2_\B  Q(v)[\zeta]\rangle_{L^2}.$ 

In particular, \eqref{eqn: derivative of N} shows that the kernel of $\na_\B\mathcal{N}(v)$ is trivial, because for any $\zeta\neq 0$, either $\pi_K \zeta \neq 0$ or $\zeta \in K^\perp $, and thus $\na_\B^2 Q(v)[\zeta,-]$ is non-vanishing by definition. 

 Furthermore, because the operator $\zeta \mapsto L\zeta :=\na^2_{\B} Q(v)[\zeta, -]$ is uniformly elliptic, Schauder estimates ensure that $\na_\B\mathcal{N}(v)$ is an isomorphism from $C^{2,\alpha}(M)\cap T_v\B$ to $C^{0,\alpha}(M)\cap T_v\B$. Thus, we may apply the inverse function theorem to obtain an inverse $\mathcal{N}^{-1}$ defined on an open neighborhood $\hat{U} \subset C^{0,\alpha}(M)\cap T_v\B$ containing $0$. Set $U = K\cap \hat{U}\subset K$ and  define the map $ F:  U \to K^{\perp} $
	  by
	  \[
	  F(\vphi) = \pi_{K^\perp} (\mathcal{N}^{-1}(\vphi)-v).
	  \]

 \medskip  
 
 {\it Step 2: Basic observations about the map $F$.}
 Let us make some initial observations that will be useful for proving the claimed properties of $F$. For any $\vphi \in U$, from the definition of $\mathcal{N}$ we have
	  \begin{equation}
	\label{eqn: phi}\begin{split}
		\vphi &=\mathcal{N}(\mathcal{N}^{-1}(\vphi) )\\
		&= \pi_K (\mathcal{N}^{-1}(\vphi)-v)+	\pi_{K^\perp} \na_\B Q(\mathcal{N}^{-1}(\vphi)).
	\end{split}
	\end{equation}
	(Recall that the image of $\mathcal N^{-1}$ is contained in $C^{2,\alpha}(M) \cap \B$ so \eqref{eqn: phi} makes sense). 
	
Taking $\pi_K$ of both sides of \eqref{eqn: phi}, we see that $\vphi = \pi_K (\mathcal{N}^{-1}(\vphi)-v)$. So, along with the definition of $F$, this implies that
	  \begin{equation}\label{eqn: split}
	 \mathcal{N}^{-1}(\vphi)= \vphi + F(\vphi) + v \qquad \text{for all } \vphi \in  U.
	 \end{equation}
	Differentiating \eqref{eqn: split}, we find that for any  $\vphi \in U$ and $\eta \in K$, we have 
\begin{equation}\label{eqn: dF}
\begin{split}
	\pi_{K^\perp} \na \,\mathcal{N}^{-1}(\vphi)[\eta]&=  \nabla \, F(\vphi)[\eta] ,\\
	\pi_{K}\na \, \mathcal{N}^{-1}(\vphi) [\eta]& = \eta.
\end{split}
\end{equation}
Notice that in \eqref{eqn: dF} we can write $\nabla$ instead of $\na_{\B}$, since $\vphi\in C^{2,\alpha}(M)\cap T_v\B$. We will do this several time in what follows.

 \medskip

{\it Step 3: Verifying properties of $F$.}	  
	We now check that this map $F$ satisfies the desired properties in the statement of Lemma \ref{lem: LS reduction}. 
	It is clear that $F(0)=0$ since $\mathcal{N}(0)=0$. To see that $\nabla F(0) = 0$, we appeal to \eqref{eqn: dF} with $\vphi=0$ and see that it suffices to show that $\pi_{K^\perp}\na \mathcal{N}^{-1}(0)[\eta]=0$ for any $\eta\in K$.
And indeed, by \eqref{eqn: derivative of N}, we see that $\na_\B \mathcal{N}(v)$ maps $K$ to $K$ and that $\na_\B \mathcal{N}(v)|_K = (\nabla \mathcal{N}^{-1}(0))^{-1}|_K = \text{Id}.$  	Thus $\na F(0)=0$.

Next, we prove property \eqref{item: LS property 1}.
 First note that $\mathcal N$ is analytic in $w\in \B$ as long as $w\mapsto Q(w)$ is analytic in $w\in \B$

 First note that $\mathcal N$ is analytic in $w\in \B$ in the sense of \cite[Definition 8.8]{ZeidlerBook}  because  $\pi_K,\pi_{K^\perp}$ are linear  and $w\mapsto Q(w)$ is analytic in $w\in \B$; see \cite[Lemma 6]{CaChRu}. It then follows by the inverse function theorem that $F$, and therefore $q$, are analytic functions over $K \cong \mathbb R^\ell$  (see \cite[Theorem 4.H]{ZeidlerBook}). 

To see \eqref{eqn: LS proj sat volume constraint}, recall \eqref{eqn: split}, that $\mathcal N^{-1}(\varphi) = v + \varphi + F(\varphi)$. But we know that the domain of $\mathcal N$ is $\B \cap C^{2,\alpha}(M)$ so it must be that the range of $\mathcal N^{-1}$ is contained in $\B$.

The first equality in \eqref{eqn: finite dim}
follows directly from taking $\pi_{K^\perp}$ of both sides of \eqref{eqn: phi} and recalling \eqref{eqn: split}. To see the second equality in \eqref{eqn: finite dim}, by the chain rule for any $\vphi\in U$ and $\eta \in K$ we have
\begin{align*}
	\frac{d}{dt}q(\vphi + t \eta)|_{t= 0} = \langle \nabla q(\vphi) ,\eta\rangle & = \na_\B Q(v + \vphi + F(\vphi))[\eta + \nabla F (\vphi)[\eta]]\\
	&  = \na_\B Q\left(v + \vphi + F(\vphi)\right)[\eta],
\end{align*}
with the latter term vanishing in the second equality because $\nabla F (\vphi)[\eta]\in K^\perp$ by \eqref{eqn: dF}.

To see property \eqref{item LS 2 W12 nbhd}, note that $U$ contains a $C^{0,\alpha}$ ball of radius $\e$ in $K$ for $\e$ sufficiently small. Since all norms are equivalent in the finite dimensional space $K$, we see that $U$ contains an $L^2$ ball of radius $\e'$ in $K$ for some $\e'$ depending on $\e.$ Now, since the $L^2$ norm is nonincreasing under the $L^2$ projection $\pi_K$, we have 
\[
\| \pi_K (u - v) \|_{L^2(M)} \leq \| u -v\|_{L^2(M)} \leq \| u-v\|_{W^{1,2}(M)}.
\]
So, provided $\delta \leq \e',$ we have that the first claim of property~\eqref{item LS 2 W12 nbhd} holds.
 Next, 
 basic elliptic regularity estimates show that if $u \in \mathcal{CSC}_1 \cap \B(v,\delta)$, we may take $\| u -v\|_{C^{2,\alpha}(M)}$ as small as desired by choosing $\delta$ to be sufficiently small; in particular, for $\delta$ sufficiently small, $u-v$ is contained in the neighborhood in which the map $\mathcal{N}$ is invertible. So, letting $w=u-v$. we have
\begin{align*}
u = \mathcal{N}^{-1}(\mathcal{N} u	)& = \mathcal{N}^{-1}( \pi_K w + \pi_{K^\perp} \na_\B Q(u))\\
& = \mathcal{N}^{-1}(\pi_K w)\\
& = v+\pi_K w + F(\pi_K w),
\end{align*}
where we have used \eqref{eqn: split} in the final equality. This proves \eqref{eqn: cp in L 1}.

Now we show property~\eqref{item: LS F estimate}. To verify the estimate \eqref{eqn: estimates for DF}, we first 
apply Schauder estimates  to find
\begin{equation}\label{eqn: schauder}
	 \|\na F(\vphi)[\eta]\|_{C^{2,\alpha}(M)}
	 	  \leq C \left\| \na^2_\B Q(v)\left[\na F(\vphi)[\eta]	\right]\right\|_{C^{0,\alpha}(M)}.
\end{equation}
From the second identity in \eqref{eqn:  dF}, we  find that 
	 \begin{equation}\label{eqn: a}
	 \begin{split}
 \na^2_\B Q(v)\left[\na\,F(\vphi)[\eta]	\right]
	 	 & = \na^2_\B Q(v)\left[ \pi_{K^\perp} \na\, \mathcal{N}^{-1}(\vphi)[\eta]	\right]\\
&=	  \pi_{K^\perp} \na^2_\B Q(v)\left[\na \mathcal{N}^{-1}(\vphi)[\eta	]\right].
	 \end{split}
	 \end{equation}
	The second equality follows because $\na^2_\B Q(v)$ commutes with $\pi_{K^\perp}$. The reason for this is, as we've seen above, that $\na^2_\B Q(v)[w] \in K^\perp$ for any $w \in W^{1,2}(M)$.

 So, from \eqref{eqn: schauder} and \eqref{eqn: a}, we find that 
\begin{equation}\label{eqn: intermediate LS}
\|\na F(\vphi)[\eta]\|_{C^{2,\alpha}(M)} \leq 
\left\| \pi_{K^\perp} \na^2  Q(v)\left[\na \mathcal{N}^{-1}(\vphi)[\eta]\right]\right\|_{C^{0,\alpha}(M)}
\end{equation}

Next, we claim that 
\begin{equation}\label{eqn: intermediate 2 LS}
	\left\| \pi_{K^\perp} \na^2  Q(v)\left[\na \mathcal{N}^{-1}(\vphi)[\eta]\right]\right\|_{C^{0,\alpha}(M)} \leq \e \| \na \mathcal{N}^{-1}(\vphi)[\eta]\|_{C^{2,\alpha}},
\end{equation}
To this end, we first note that  differentiating \eqref{eqn: phi} in the direction $\eta\in K$, we have 
\begin{align*}
	\eta = \pi_K\na \mathcal{N}^{-1}(\vphi)[\eta] + \pi_{K^\perp} \na^2 Q(\mathcal{N}^{-1}(\vphi))[\na \mathcal{N}^{-1}(\vphi)[\eta]].
\end{align*}
So, by taking $\pi_{K^\perp}$ of both sides, we determine that
\begin{equation}\label{eqn: vanishing term}
	\pi_{K^\perp} \na^2 Q(\mathcal{N}^{-1}(\vphi))[\na \mathcal{N}^{-1}(\vphi) [\eta]]=0.
\end{equation}
So, we can write
\begin{align*}
\big\|
\pi_{K^\perp} &\na^2Q(v)[\na \mathcal{N}^{-1}(\vphi) [\eta]]\big\|_{C^{0,\alpha}}\\
& =\big\| \pi_{K^\perp} \left(\left(\na^2 Q(v)-\na^2 Q(\mathcal{N}^{-1}(\vphi))\right)[\na \mathcal{N}^{-1}(\vphi) [\eta]] \right)\big\|_{C^{0,\alpha}}\\
& \leq \e \| \na \mathcal{N}^{-1}(\vphi)[\eta]\|_{C^{2,\alpha}}.
\end{align*}
The final inequality follows because Lemma \ref{lem:bm}  implies that, for a modulus of continuity, $\omega$ (which may change from line to line):
\begin{equation}\begin{split}
\| \left(\na^2 Q(v)-\na^2 Q(\mathcal{N}^{-1}(\varphi))\right)\|_{C^{2,\alpha}\rightarrow C^{0,\alpha}}
& \leq \omega\left(\|v-\mathcal N^{-1} \vphi\|_{C^{2,\alpha}}\right)\\
& \leq \omega(\|\vphi\|_{C^{0,\alpha}}) \leq \tilde{\omega}(\|\vphi\|_{W^{1,2}}).
\end{split}
\end{equation}
The penultimate inequality follows by the continuity of $\mathcal N^{-1}$ from $C^{0,\alpha} \rightarrow C^{2,\alpha}$. The last inequality follows provided that $\|\vphi\|_{W^{1,2}}$ is sufficiently small (recall that $\vphi \in K$ and all the norms are equivalent on $K$). This establishes \eqref{eqn: intermediate 2 LS}.

Thus far, from \eqref{eqn: intermediate LS} and \eqref{eqn: intermediate 2 LS}, we have shown that 
\[
\|\na F(\vphi)[\eta]\|_{C^{2,\alpha}(M)}\leq \e \| \na \mathcal{N}^{-1}(\vphi)[\eta]\|_{C^{2,\alpha}}
\]

Now, writing $\na \mathcal{N}^{-1}(\vphi)[\eta] = \eta + \na F(\vphi)[\eta]$ by \eqref{eqn: split}, we see that
\[
\|\na F(\vphi)[\eta]\|_{C^{2,\alpha}(M)} \leq \e \left( \|\eta\|_{C^{2,\alpha}} + \| \na F(\vphi)[\eta]\|_{C^{2,\alpha}}\right).
\]
Absorbing the second term into the left-hand side, and recalling that all norms are equivalent on $K$, we establish \eqref{eqn: estimates for DF}.
This concludes the proof of the lemma.
\end{proof}

\bibliography{references-Yamabe}{}

\providecommand{\bysame}{\leavevmode\hbox to3em{\hrulefill}\thinspace}
\providecommand{\MR}{\relax\ifhmode\unskip\space\fi MR }
\providecommand{\MRhref}[2]{%
  \href{http://www.ams.org/mathscinet-getitem?mr=#1}{#2}
}
\providecommand{\href}[2]{#2}
\begin{thebibliography}{CFMP09}

\bibitem[AKN]{AKN1}
M.~Allen, D.~Kriventsov, and R.~Neumayer, \emph{Sharp quantitative
  {F}aber-{K}rahn inequalities and the {A}lt-{C}affarelli-{F}riedman
  monotonicity formula}, Submitted for publication, preprint available at
  arXiv:2107.03505.

\bibitem[And05]{And}
M.~T. Anderson, \emph{On uniqueness and differentiability in the space of
  {Y}amabe metrics}, Communications in Contemporary Mathematics \textbf{07}
  (2005), no.~03, 299--310.

\bibitem[AS88]{AdSi}
D.~Adams and L.~Simon, \emph{Rates of asymptotic convergence near isolated
  singularities of geometric extrema}, Indiana Univ. Math. J. \textbf{37}
  (1988), no.~2, 225--254. \MR{963501}

\bibitem[Aub76a]{AubinYamabe}
T.~Aubin, \emph{\'{E}quations diff\'{e}rentielles non lin\'{e}aires et
  probl\`eme de {Y}amabe concernant la courbure scalaire}, J. Math. Pures Appl.
  (9) \textbf{55} (1976), no.~3, 269--296. \MR{431287}

\bibitem[Aub76b]{AubinSobolev}
T.~Aubin, \emph{Probl\`emes isop\'{e}rim\'{e}triques et espaces de {S}obolev},
  J. Differential Geometry \textbf{11} (1976), no.~4, 573--598. \MR{448404}

\bibitem[BDF17]{BDF}
V.~B\"{o}gelein, F.~Duzaar, and N.~Fusco, \emph{A quantitative isoperimetric
  inequality on the sphere}, Adv. Calc. Var. \textbf{10} (2017), no.~3,
  223--265. \MR{3667048}

\bibitem[BDS15]{BDS}
V.~B\"{o}gelein, F.~Duzaar, and C.~Scheven, \emph{A sharp quantitative
  isoperimetric inequality in hyperbolic {$n$}-space}, Calc. Var. Partial
  Differential Equations \textbf{54} (2015), no.~4, 3967--4017. \MR{3426101}

\bibitem[BE91]{BianchiEgnell91}
G.~Bianchi and H.~Egnell, \emph{A note on the {S}obolev inequality}, J. Funct.
  Anal. \textbf{100} (1991), no.~1, 18--24. \MR{1124290}

\bibitem[BL85]{brezis1985sobolev}
H.~Brezis and E.~H. Lieb, \emph{Sobolev inequalities with remainder terms}, J.
  Funct. Anal. \textbf{62} (1985), no.~1, 73--86. \MR{790771}

\bibitem[BM01]{BerMal}
M.~Berti and A.~Malchiodi, \emph{Non-compactness and multiplicity results for
  the {Y}amabe problem on {$S^n$}}, J. Funct. Anal. \textbf{180} (2001), no.~1,
  210--241. \MR{1814428}

\bibitem[BM09]{BrendleMarques}
S.~Brendle and F.~C. Marques, \emph{Blow-up phenomena for the {Y}amabe
  equation. {II}}, J. Differential Geom. \textbf{81} (2009), no.~2, 225--250.
  \MR{2472174}

\bibitem[BM11]{BrendleMarquesSurvey}
S.~Brendle and F.~C. Marques, \emph{Recent progress on the {Y}amabe problem},
  Surveys in geometric analysis and relativity, Adv. Lect. Math. (ALM),
  vol.~20, Int. Press, Somerville, MA, 2011, pp.~29--47. \MR{2906920}

\bibitem[Bre05]{BrendleFlow1}
S.~Brendle, \emph{Convergence of the {Y}amabe flow for arbitrary initial
  energy}, J. Differential Geom. \textbf{69} (2005), no.~2, 217--278.
  \MR{2168505}

\bibitem[Bre07]{BrendleFlow2}
S.~Brendle, \emph{Convergence of the {Y}amabe flow in dimension 6 and higher},
  Invent. Math. \textbf{170} (2007), no.~3, 541--576. \MR{2357502}

\bibitem[Bre08]{BrendleNonCpt}
S.~Brendle, \emph{Blow-up phenomena for the {Y}amabe equation}, J. Amer. Math.
  Soc. \textbf{21} (2008), no.~4, 951--979. \MR{2425176}

\bibitem[CCR15]{CaChRu}
A.~Carlotto, O.~Chodosh, and Y.~A. Rubinstein, \emph{Slowly converging {Y}amabe
  flows}, Geom. Topol. \textbf{19} (2015), no.~3, 1523--1568. \MR{3352243}

\bibitem[CES19]{ChEnSp}
O.~Chodosh, M.~Engelstein, and L.~Spolaor, \emph{The {R}iemannian quantitative
  isoperimetric inequality}.

\bibitem[CF13]{CarlenFigalli}
E.~A. Carlen and A.~Figalli, \emph{Stability for a {GNS} inequality and the
  log-{HLS} inequality, with application to the critical mass {K}eller-{S}egel
  equation}, Duke Math. J. \textbf{162} (2013), no.~3, 579--625. \MR{3024094}

\bibitem[CFM18]{CFM}
G.~Ciraolo, A.~Figalli, and F.~Maggi, \emph{A quantitative analysis of metrics
  on {$\Bbb{R}^n$} with almost constant positive scalar curvature, with
  applications to fast diffusion flows}, Int. Math. Res. Not. IMRN (2018),
  no.~21, 6780--6797. \MR{3873544}

\bibitem[CFMP09]{CFMP}
A.~Cianchi, N.~Fusco, F.~Maggi, and A.~Pratelli, \emph{The sharp {S}obolev
  inequality in quantitative form}, J. Eur. Math. Soc. (JEMS) \textbf{11}
  (2009), no.~5, 1105--1139. \MR{2538501}

\bibitem[CL12]{CiLe}
M.~Cicalese and G.~P. Leonardi, \emph{A selection principle for the sharp
  quantitative isoperimetric inequality}, Arch. Ration. Mech. Anal.
  \textbf{206} (2012), no.~2, 617--643. \MR{2980529}

\bibitem[CM17]{CM}
G.~Ciraolo and F.~Maggi, \emph{On the shape of compact hypersurfaces with
  almost-constant mean curvature}, Comm. Pure Appl. Math. \textbf{70} (2017),
  no.~4, 665--716. \MR{3628882}

\bibitem[CS13]{CicaleseSparado13}
M.~Cicalese and E.~Spadaro, \emph{Droplet minimizers of an isoperimetric
  problem with long-range interactions}, Comm. Pure Appl. Math. \textbf{66}
  (2013), no.~8, 1298--1333. \MR{3069960}

\bibitem[DH05]{DruetHebey}
O.~Druet and E.~Hebey, \emph{Elliptic equations of {Y}amabe type}, IMRS Int.
  Math. Res. Surv. (2005), no.~1, 1--113. \MR{2148873}

\bibitem[Dru04]{Druet}
O.~Druet, \emph{Compactness for {Y}amabe metrics in low dimensions}, Int. Math.
  Res. Not. (2004), no.~23, 1143--1191. \MR{2041549}

\bibitem[Esc92a]{Escobar2}
J.~F. Escobar, \emph{Conformal deformation of a {R}iemannian metric to a scalar
  flat metric with constant mean curvature on the boundary}, Ann. of Math. (2)
  \textbf{136} (1992), no.~1, 1--50. \MR{1173925}

\bibitem[Esc92b]{Escobar1}
J.~F. Escobar, \emph{The {Y}amabe problem on manifolds with boundary}, J.
  Differential Geom. \textbf{35} (1992), no.~1, 21--84. \MR{1152225}

\bibitem[FG20]{FGcps}
A.~Figalli and F.~Glaudo, \emph{On the sharp stability of critical points of
  the {S}obolev inequality}, Arch. Ration. Mech. Anal. \textbf{237} (2020),
  no.~1, 201--258. \MR{4090466}

\bibitem[FMP08]{FMP}
N.~Fusco, F.~Maggi, and A.~Pratelli, \emph{The sharp quantitative isoperimetric
  inequality}, Ann. of Math. (2) \textbf{168} (2008), no.~3, 941--980.
  \MR{2456887}

\bibitem[FMP10]{FiMaPr}
A.~Figalli, F.~Maggi, and A.~Pratelli, \emph{A mass transportation approach to
  quantitative isoperimetric inequalities}, Invent. Math. \textbf{182} (2010),
  no.~1, 167--211. \MR{2672283}

\bibitem[FMP13]{FiMPsob}
A.~Figalli, F.~Maggi, and A.~Pratelli, \emph{Sharp stability theorems for the
  anisotropic {S}obolev and log-{S}obolev inequalities on functions of bounded
  variation}, Adv. Math. \textbf{242} (2013), 80--101. \MR{3055988}

\bibitem[FN19]{FN}
A.~Figalli and R.~Neumayer, \emph{Gradient stability for the {S}obolev
  inequality: the case {$p\geq 2$}}, J. Eur. Math. Soc. (JEMS) \textbf{21}
  (2019), no.~2, 319--354. \MR{3896203}

\bibitem[Fus15]{FuSurvey}
N.~Fusco, \emph{The quantitative isoperimetric inequality and related topics},
  Bull. Math. Sci. \textbf{5} (2015), no.~3, 517--607. \MR{3404715}

\bibitem[FZ20]{FiZhang}
A.~Figalli and Y.~R.-Y. Zhang, \emph{Sharp gradient stability for the {S}obolev
  inequality}.

\bibitem[GTMT]{GTMT}
N.~Garc\'{i}a~Trillos, R.~Murray, and M.~Thorpe, \emph{From graph cuts to
  isoperimetric inequalities: {C}onvergence rates of {C}heeger cuts on data
  clouds}, Preprint available at arXiv:2004.09304.

\bibitem[HS19]{hynd2019symmetry}
R.~Hynd and F.~Seuffert, \emph{On the symmetry and monotonicity of {M}orrey
  extremals}, arXiv preprint arXiv:1912.11574 (2019).

\bibitem[KM17]{KM}
B.~Krummel and F.~Maggi, \emph{Isoperimetry with upper mean curvature bounds
  and sharp stability estimates}, Calc. Var. Partial Differential Equations
  \textbf{56} (2017), no.~2, Paper No. 53, 43. \MR{3627438}

\bibitem[KMS09]{KMS}
M.~A. Khuri, F.~C. Marques, and R.~M. Schoen, \emph{A compactness theorem for
  the {Y}amabe problem}, J. Differential Geom. \textbf{81} (2009), no.~1,
  143--196. \MR{2477893}

\bibitem[Lio84]{Lions}
P.-L. Lions, \emph{The concentration-compactness principle in the calculus of
  variations. the locally compact case, part 2}, Annales de l'I.H.P. Analyse
  non lin\'eaire \textbf{1} (1984), no.~4, 223--283 (en). \MR{778974}

\bibitem[{\L}oj65]{Loj}
S.~{\L}ojasiewicz, \emph{Ensembles semi-analytiques}, IHES notes, 1965.

\bibitem[LP87]{LeeParker}
J.~M. Lee and T.~H. Parker, \emph{The {Y}amabe problem}, Bull. Amer. Math. Soc.
  (N.S.) \textbf{17} (1987), no.~1, 37--91. \MR{888880}

\bibitem[LZ99]{LiZhu}
Y.Y. Li and M.~Zhu, \emph{Yamabe type equations on three-dimensional
  {R}iemannian manifolds}, Commun. Contemp. Math. \textbf{1} (1999), no.~1,
  1--50. \MR{1681811}

\bibitem[Neu20]{Neumayer}
R.~Neumayer, \emph{A note on strong-form stability for the {S}obolev
  inequality}, Calc. Var. Partial Differential Equations \textbf{59} (2020),
  no.~1, Paper No. 25, 8. \MR{4048334}

\bibitem[Oba72]{Obata}
M.~Obata, \emph{The conjectures on conformal transformations of {R}iemannian
  manifolds}, J. Differential Geometry \textbf{6} (1971/72), 247--258.
  \MR{303464}

\bibitem[Pol91]{PollackThesis}
D.~Pollack, \emph{The extent of nonuniqueness for the {Y}amabe problem},
  ProQuest LLC, Ann Arbor, MI, 1991, Thesis (Ph.D.)--Stanford University.
  \MR{2686929}

\bibitem[Pol93]{Pollack1}
D.~Pollack, \emph{Nonuniqueness and high energy solutions for a conformally
  invariant scalar equation}, Comm. Anal. Geom. \textbf{1} (1993), no.~3-4,
  347--414. \MR{1266473}

\bibitem[Sch84]{SchYamabe}
R.~Schoen, \emph{Conformal deformation of a {R}iemannian metric to constant
  scalar curvature}, J. Differential Geom. \textbf{20} (1984), no.~2, 479--495.
  \MR{788292}

\bibitem[Sch89]{SchYamSpheres}
R.~M. Schoen, \emph{Variational theory for the total scalar curvature
  functional for {R}iemannian metrics and related topics}, Topics in calculus
  of variations ({M}ontecatini {T}erme, 1987), Lecture Notes in Math., vol.
  1365, Springer, Berlin, 1989, pp.~120--154. \MR{994021}

\bibitem[Sch91]{SchoenNumber}
R.~M. Schoen, \emph{On the number of constant scalar curvature metrics in a
  conformal class}, Differential geometry, Pitman Monogr. Surveys Pure Appl.
  Math., vol.~52, Longman Sci. Tech., Harlow, 1991, pp.~311--320. \MR{1173050}

\bibitem[Sim83]{Simon0}
L.~Simon, \emph{Asymptotics for a class of nonlinear evolution equations, with
  applications to geometric problems}, Ann. of Math. (2) \textbf{118} (1983),
  no.~3, 525--571. \MR{727703 (85b:58121)}

\bibitem[Tal76]{Talenti}
G.~Talenti, \emph{Best constant in {S}obolev inequality}, Ann. Mat. Pura Appl.
  (4) \textbf{110} (1976), 353--372. \MR{463908}

\bibitem[Tru68]{Trudinger}
Neil~S. Trudinger, \emph{Remarks concerning the conformal deformation of
  {R}iemannian structures on compact manifolds}, Ann. Scuola Norm. Sup. Pisa
  Cl. Sci. (3) \textbf{22} (1968), 265--274. \MR{240748}

\bibitem[Yam60]{OGYamabe}
H.~Yamabe, \emph{On a deformation of {R}iemannian structures on compact
  manifolds}, Osaka Math. J. \textbf{12} (1960), 21--37. \MR{125546}

\bibitem[Zei86]{ZeidlerBook}
E.~Zeidler, \emph{Nonlinear functional analysis and its applications. {I}},
  Springer-Verlag, New York, 1986, Fixed-point theorems, Translated from the
  German by Peter R. Wadsack. \MR{816732}

\end{thebibliography}
\bibliographystyle{amsbeta}
	
	%
	%
	%
	%
	%
	%
	%
	%
	%
	%
	%
	%
	%
%

\end{document}